\numberwithin{equation}{section}
\newtheorem{theorem}{Theorem}[section]
\newtheorem{thm}{Theorem}[section]
\newtheorem{lemma}{Lemma}[section]
\newtheorem{algorithm}[thm]{Algorithm}
\newtheorem{remark}[thm]{Remark}
\newcommand{\norm}[1]{\left\Vert#1\right\Vert}
\newcommand{\nr}[1]{\ensuremath{\left\|{#1}\right\|}}
\DeclareMathOperator{\diag}{diag}
\newcommand{\eps}{\varepsilon}
\DeclareMathOperator{\arccosh}{arccosh}
\newcommand{\goto}{\rightarrow}
\newcommand{\bigo}{{\mathcal O}}
\newcommand{\f}{\frac}
\DeclareMathOperator{\spa}{span}
\begin{document}

\title{Momentum accelerated power iterations and the restarted Lanczos method}

\author{
Alessandro Barletta
\thanks{Department of Mathematics, University of Florida, Gainesville, FL
  32611-8105 (jbarletto1@ufl.edu)}
\and 
Nicholas Marshall
\thanks{Department of Mathematics, Oregon State University, Corvallis, OR
  32611-8105 (marsnich@oregonstate.edu)}
\and 
Sara Pollock
\thanks{Department of Mathematics, University of Florida, Gainesville, FL
  32611-8105 (s.pollock@ufl.edu)}
}
\date{\today}

\maketitle

\begin{abstract}
In this paper we compare two methods for finding extremal eigenvalues and eigenvectors: the restarted Lanczos method and momentum accelerated power iterations.  The convergence of both methods is based on ratios of Chebyshev polynomials evaluated at subdominant and dominant eigenvalues; however, the convergence is not the same.  Here we compare the theoretical convergence properties of both methods, and determine the relative regimes where each is more efficient.  We further introduce a preconditioning technique for the restarted Lanczos method using momentum accelerated power iterations, and demonstrate its effectiveness.  The theoretical results are backed up by numerical tests on benchmark problems.
\end{abstract}


\section{Introduction}\label{sec:intro}
The Lanczos algorithm and its variants are among the most efficient methods for approximating an extremal eigenvector, or extremal set of eigenvectors of a large sparse symmetric or complex Hermitian matrix \cite{aishima15,saad80,saad11-book,urschel21}. 
Its implementation is generally limited, however, by computational and memory resources, 
and in practical computing it may be run as a restarted method, as in \cite{sorensen1992implicit,WuSi00}.
As shown in \cite{aishima15}, global convergence properties are maintained by restarted variants of the method, so long as the initial vector is not orthogonal to the target eigenvector.  However, restarting limits the overall efficiency.
On the other hand, recently introduced momentum accelerated power iterations, developed in \cite{XHDMR18}, based on \cite{polyak1964}, further investigated in \cite{RJRH22}, and with a dynamic implementation introduced in \cite{APZ24}, feature lower memory and ease of implementation for the same problem class. 
These methods differ in that the method of \cite{XHDMR18} requires knowledge of the second eigenvalue to set the optimal momentum parameter, while the method of \cite{RJRH22} approximates this eigenvalue with a preliminary deflation stage, and the method of \cite{APZ24} uses a posteriori information based on computed residuals and a Rayleigh quotient to approximate the optimal parameter.
These methods are closely related to Chebyshev iteration techniques as in \cite{GoVa61,GuRo02,knyazev87}, and have the advantages of requiring neither orthogonalization (or re-orthogonaliztion) nor the solution of auxiliary eigenproblems. As shown in \cite{GuRo02}, Chebyshev-type methods are also less sensitive to roundoff errors.

Here we consider the question of determining regimes in which either restarted Lanczos or momentum accelerated power iterations are preferable in terms of convergence rate for the same number of matrix-vector multiplications. For symmetric problems with eigenvalues $|\lambda_1| > |\lambda_2|$, and $|\lambda_{k}| \ge 
|\lambda_{k+1}|$, for $k \ge 2$, we find that the momentum accelerated power iterations achieve faster convergence rates for the same number of matrix-vector multiplies as restarted Lanczos implemented with increasingly large Krylov subspaces as the ratio $|\lambda_2/\lambda_1|$ 
 tends to $1$. We also find that combining the two techniques by using the momentum methods as a preconditioner for restarted Lanczos can lead to faster convergence than either method alone in situations where restarted Lanczos with larger Krylov subspaces yields faster convergence than momentum accelerated power iterations. This technique is distinct from the extrapolation method applied to Arnoldi in \cite{PoSc21-arnoldi}, and may be generalizable to nonsymmetric problems via the more generalized momentum-type methods in \cite{CMP25-d,CMP25-r}.
In what follows, we quantify the regimes where each of the methods considered herein yields faster convergence, and discuss the mechanism behind the successful combination of the two techniques as a momentum-preconditioned restarted Lanczos method.

The remainder of the paper is structured as follows. In subsections \ref{subsec:notation}-\ref{subsec:algo} we state the notation and algorithms used throughout the rest of the paper.
In section \ref{sec:bg} we present background theoretical results on the Chebyshev polynomials, the static and dynamic momentum accelerated power iterations, and the restarted Lanczos method.
The main theoretical results on comparing the analytical rates of convergence between restarted Lanczos($m$) and momentum accelerated power iterations are presented in section \ref{sec:compare}. In section \ref{sec:precon} we discuss momentum accelerated power iterations as a preconditioner for restarted Lanczos($m$). In section \ref{sec:numerics} we present benchmark examples illustrating the theoretical results.

\subsection{Notation}\label{subsec:notation}
Suppose $A$ is an $n \times n$ real symmetric or complex Hermitian matrix with eigenvalues $\lambda_1 \ldots \lambda_n$, ordered by decreasing magnitude with $\lambda_1 > |\lambda_2| \ge \cdots \ge |\lambda_n|$, with corresponding eigenvectors $\phi_1, \ldots, \phi_n$. 
Note that we assume
there is a unique eigenvalue $\lambda_1$ of largest magnitude; 
however, the assumption that $\lambda_1 > 0$ is for notational convenience and clarity of presentation. If the largest magnitude eigenvalue of $A$ were negative, then the results as stated apply to $-A$.

Throughout the remainder of this paper, $n$ will refer to the dimension of matrix $A$, whereas $m$ will refer to the dimension of the Lanczos Krylov subspace $\mathcal K_m$.
We next state the base algorithms which we will refer to throughout the rest of the paper.
\subsection{Algorithms}\label{subsec:algo}
To fix notation, and later refer to its use as a preconditioner, we first state the power iteration.
\begin{algorithm}[Power]\label{alg:pow}
\begin{algorithmic}
\State{Choose $v_0$, set $h_0 = \nr{v_0}$ and $x_0 = h_0^{-1}v_0$}
\State{Set $v_1 = A x_0$}
\For{$k \ge 0$}
\State{Set $h_{k+1} = \nr{v_{k+1}}$ and  $x_{k+1} = h_{k+1}^{-1} v_{k+1}$}
\State{Set $v_{k+2} = A x_{k+1}$}
\State{Set $\nu_{k+1} = (v_{k+2}, x_{k+1})$ and 
       $\|d_{k+1}\| = \|v_{k+2} - \nu_{k+1} x_{k+1}\|$}
\State{{STOP} if $\nr{d_{k+1}} <$ \tt{tol}  }
\EndFor
\end{algorithmic}
\end{algorithm}

We next state the static momentum method with momentum parameter $\beta$, as introduced in \cite{XHDMR18}.
\begin{algorithm}[Static momentum]\label{alg:statmo}
\begin{algorithmic}
\State Set parameter $\beta >0$
\State{Do a single iteration of algorithm \ref{alg:pow}} \Comment{$k = 0$}
\For{$k \ge 1$} \Comment{$k \ge 1$}
\State{Set $u_{k+1} = v_{k+1} - (\beta/h_k) x_{k-1}$}
\State{Set $h_{k+1} = \nr{u_{k+1}}$ and $x_{k+1} = h_{k+1}^{-1} u_{k+1}$} 
\State{Set $v_{k+2} = A x_{k+1}$}
\State{Set $\nu_{k+1} = (v_{k+2}, x_{k+1})$ and 
           $\|d_{k+1}\| = \nr{v_{k+2} - \nu_{k+1} x_{k+1}}$}
\State{{STOP} if $\nr{d_{k+1}} <$ \tt{tol}  }
\EndFor
\end{algorithmic}
\end{algorithm}

As shown in \cite{APZ24,XHDMR18}, an efficient implementation of algorithm \ref{alg:statmo} requires a good approximation of $\lambda_2$ to approximate the optimal choice of $\beta$, which is $\lambda_2^2/4$.  When we use algorithm \ref{alg:statmo} as a preconditioner for restarted Lanczos, a good approximation to this parameter may be available.  Otherwise, the dynamic implementation introduced in \cite{APZ24} can be used to approximate $\lambda_2$, and hence the optimal momentum parameter $\beta$, by monitoring the residual convergence rate. The dynamic momentum algorithm stated next  was shown in \cite{APZ24} to achieve the efficiency of the optimal static method without explicit a priori knowledge of the spectrum.
\begin{algorithm}[Dynamic momentum]\label{alg:dymo}
\begin{algorithmic} 
\State {Do two iterations of algorithm \ref{alg:pow}} \Comment{$k = 0,1$}
\State{Set $r_{2} = \min\{d_2/d_{1},1\}$}
\For{$k \ge 2$} \Comment{$k \ge 2$}
\State{Set $\beta_k = \nu_{k}^2 r_{k}^2/4$}
\State{Set $u_{k+1} = v_{k+1} - (\beta_k/h_k) x_{k-1}$}
\State{Set $h_{k+1} = \nr{u_{k+1}}$ and $x_{k+1} = h_{k+1}^{-1} u_{k+1}$}
\State{Set $v_{k+2} = A x_{k+1}$}
\State{Set $\nu_{k+1} = (v_{k+2}, x_{k+1})$ and
           $\|d_{k+1}\| = \nr{v_{k+2} - \nu_{k+1} x_{k+1}}$}
\State{Set $\rho_k = \min\{\|d_{k+1}\|/\|d_k\|,1\}$  and
              $r_{k+1} = 2\rho_k/(1 + \rho_k^2)$}
\State{{STOP} if $\nr{d_{k+1}} <$ \tt{tol}  }
\EndFor
\end{algorithmic}
\end{algorithm}

Finally, we state the standard restarted Lanczos method, which we will refer to hereafter as restarted Lanczos($m$), to emphasize the dependence of its convergence on the dimension $m$ of the Krylov subspace used in its implementation.
\begin{algorithm}[Restarted Lanczos($m$)]\label{alg:lanczosm}
\begin{algorithmic}
\State{Choose $v_0$. Set $\beta_0 = \nr{v_0}$, $q_1 = v_0/\norm{v_0}$ and $q_{0} = 0$.}
\For {$1 \le k \le m-1$}
\State{Set $v = A q_k$} 
\State{Set $\alpha_k = (q_k,v)$}
\State{Set $u = v - \beta_{k} q_{k-1} - \alpha_k q_k$}
\State{Set $\beta_k = \norm{u}$}
\State{Set $q_{k+1} = u/\beta_k$}
\EndFor
\State{Set $v = A q_m$}
\State{Set $\alpha_m = (q_m,v)$}
\State{Set \[T = \begin{pmatrix} \ddots & \ddots & \ddots\\
                                 & \beta & \alpha & \beta\\
                                & & \ddots & \ddots & \ddots \end{pmatrix}\]}
\State{Compute the maximum magnitude eigenvalues $\nu_1, \nu_2$ of $T$, with corresponding eigenvectors $\tilde x_1, \tilde x_2$}
\State{Set $[x_1,x_2] = Q([\tilde x_1,\tilde x_2])$}
\State{Set $\|d_m\| = \|Ax_1 - \nu_1 x_1\|$}
\end{algorithmic}
\end{algorithm}

In the tests of sections \ref{subsec:ex1} and \ref{sec:numerics}, the 
condition to exit the loop is convergence of the residual, 
$\nr{d_{k+1}}<$ {\tt tol}, for a given tolerance {\tt tol}.

\section{Background}\label{sec:bg}
In this section we summarize the properties of the main algorithms to be considered: 
the static and dynamic versions of the momentum accelerated power iteration (algorithm \ref{alg:statmo} and \ref{alg:dymo}), and restarted Lanczos($m$) (algorithm \ref{alg:lanczosm}).  
First, we review some standard background on Chebyshev polynomials, which serve as the basis for the convergence analysis of the above methods.

\subsection{Chebyshev polynomials}\label{subsec:cheb}
The following results on the Chebyshev polynomials will be used throughout the remainder of the discussion. These results can be found, for instance, in \cite[Chapter 2]{Gautschi}, \cite[Chapter 2]{SaVi14}, and \cite[Chapter 4]{saad11-book}, which refers to \cite{cheney66-approx}.

The Chebyshev polynomials of the first kind satisfy the recurrence relation
\begin{align}\label{eqn:chebyrec}
T_{N+1}(t) = 2 t T_N(t) - T_{N-1}(t), \quad \text{for} \quad N \ge 1,
\end{align}
with $T_0(t) = 1$ and $T_1(t) = t$. In closed form, they are given by
\begin{equation}  \label{eqn:chebsqrtformula}
\begin{split}
T_N(t) &= \frac 1 2\left(  
   \left(t + \sqrt{t^2-1} \right)^N + \left( t - \sqrt{t^2-1}\right)^N \right)
   \\ &= \frac 1 2\left( 
\left(t + \sqrt{t^2-1} \right)^N + \left( t + \sqrt{t^2-1}\right)^{-N} \right),
\end{split}
\end{equation}
where the second formula follows from the fact that
$\left(t - \sqrt{t^2-1} \right) \left( t + \sqrt{t^2-1}\right) = 1$.
The Chebyshev polynomials also have the following explicit formula in terms of trigonometric and hyperbolic functions, namely
\begin{align}\label{eqn:chebycos}
T_N(x) = 
\begin{cases}
\cos(N \arccos(x)), & \text{for } x \in [-1,1] \\
\cosh(N \arccosh(x)), & \text{for } x \in (1,\infty) \\
(-1)^N \cosh(N \arccosh(-x)), & \text{for } x \in (-\infty,-1). \\
\end{cases}
\end{align}
From \eqref{eqn:chebycos}, each polynomial $T_N(t)$ satisfies $T_N(t) \le 1$ for $t \in[-1,1]$. 

Next we show that $T_N(t)$ grows exponentially 
as a function of $N$ outside of the interval $[-1,1]$, and state upper and lower bounds on the growth rate.
From \eqref{eqn:chebsqrtformula} we have
\begin{align*}
T_N(t) = \frac 1 2\left(  \left(t + \sqrt{t^2-1} \right)^N \!\!+ \left( t + \sqrt{t^2-1}\right)^{-N} \right) = \frac{1}{2} \left( t + \sqrt{t^2-1}\right)^N \left(1 + 
\left( t + \sqrt{t^2-1}\right)^{-2N} \right).
\end{align*}
Substituting $t = 1 + \varepsilon$ for some $\varepsilon > 0$ gives
\begin{align}\label{eqn:chub1}
T_N(1+\varepsilon) = \frac{1}{2} \left( 1 + \varepsilon + \sqrt{2\varepsilon + \varepsilon^2}\right)^N 
\!\!\left(1 + \left( 1 + \varepsilon + \sqrt{2 \varepsilon + \varepsilon^2}\right)^{-2N} \right).
\end{align}
Applying the inequalities
$1 + \sqrt{2 \varepsilon} \le  1 + \varepsilon + \sqrt{2\varepsilon + \varepsilon^2}$
and $\sqrt{2\varepsilon + \varepsilon^2} \le \sqrt{2 \varepsilon} + \varepsilon$, 
to \eqref{eqn:chub1} 
yields the lower and upper bounds
\begin{equation} \label{eqn:upperandlower}
\frac 1 2 \left( 1 + \sqrt{2 \eps}\right)^N \le
T_N(1 + \eps) \le 
\frac{1}{2} \left(1 + 2 \varepsilon + \sqrt{2 \varepsilon} \right)^N \left( 1 + (1 + \sqrt{2 \varepsilon})^{-2N} \right).
\end{equation}
Note that by symmetry, the same upper and lower bounds hold for $|T_N(-1-\varepsilon)|$.

Importantly for the discussion that follows, the Chebyshev polynomials satisfy an optimality property over all polynomials of fixed degree scaled to a value 1 at a point $\gamma$ that lies outside of an interval $[\alpha,\beta]$, given by
\begin{align}\label{eqn:chebyopt}
\min_{p \in \mathbb P_N, p(\gamma) = 1}~
\max_{t \in [\alpha,\beta]} |p(t)|
= \f{1}{\left| T_N\left(1 + 2\f{\gamma - \beta}{\beta - \alpha} \right) \right|},
\end{align}
where $\mathbb{P}_N$ denotes the space of polynomials of degree at most $N$.
\subsection{Momentum accelerated power iteration}\label{subsec:mop}
The standard power iteration $u_{k+1} = A x_k$, with normalization $x_k = u_k/\nr{u_k}$, for some norm $\nr{\cdot}$, is easily seen to satisfy $x_N = \widetilde C_N A^N x_0 = \widetilde C_N \widetilde p_N(A) x_0$, for normalization constant $\widetilde C_N$, and $\widetilde p_j(A) = A^j$. 
See, for instance, \cite[chapter 5]{QSS07}. A natural way to change the convergence properties of the iteration is to exchange the monomial $\widetilde p_j(\cdot)$, for some other polynomial $p_j(\cdot)$. 

If the polynomials $p_j(A)$ can be computed by a recurrence relation that involves one application of $A$ per iteration, then $p_N(A) x_0$ can be computed with essentially the same efficiency as  $\widetilde p_N(A) x_0$, so long as the coefficients of the recurrence relation for $p_N$ are available or can be computed easily. 
Next we consider how changing the iteration polynomial changes the convergence of the iteration.

As in the standard analysis of the power iteration, consider the expansion of the initial iterate $x_0$ as a linear combination of the eigenvectors of $A$, namely $x_0 = \sum_{i = 1}^n \alpha_i \phi_i$.
Then applying polynomial $p_j(A)$ we have
\begin{align}\label{eqn:polyexp}
p_j (A) x_0 =  \sum_{i = 1}^n \alpha_i p_j(A) \phi_i
=  \sum_{i = 1}^n \alpha_i p_j( \lambda_i) \phi_i.
\end{align}
For the iteration with $p_N(A)$ to converge faster than $\widetilde p_N(A)$, we require
\begin{align}\label{eqn:betterp}
\max_{i =2 \ldots n} \left| \f{p_N(\lambda_i)}{p_N(\lambda_1)} \right|
< \max_{i =2 \ldots n} \left| \f{\widetilde p_N(\lambda_i)}{\widetilde p_N(\lambda_1)} \right| = \left| \f{\lambda_2}{\lambda_1} \right|^N.
\end{align}
To connect \eqref{eqn:betterp} to
\eqref{eqn:chebyopt} 
in a way that requires only limited spectral knowledge
we can consider the problem of determining the polynomial $p \in \mathbb{P}_n$ that minimizes $\max_{t \in [-|\lambda_2|,|\lambda_2|]} |p(t)/p(\lambda_1)|$.  Taking
$\gamma = \lambda_1$, and $[\alpha,\beta] =
[-|\lambda_2|,|\lambda_2|]$, in \eqref{eqn:chebyopt} yields
\begin{align}\label{eqn:pNopt}
\min_{p \in \mathbb P_N, p(\gamma) = 1}~
\max_{t \in [-|\lambda_2|,|\lambda_2|]} |p(t)|
= \f{1}{\left| T_N\left(1 + \f{\lambda_1 - |\lambda_2|}{|\lambda_2|} \right) \right|}
= \f{1}{\left| T_N\left(\f{|\lambda_1|}{|\lambda_2|} \right) \right|}.
\end{align}
As the Chebyshev polynomials satisfy the optimality property \eqref{eqn:pNopt}, hence \eqref{eqn:betterp}, and are generated by a simple recurrence relation \eqref{eqn:chebyrec}, they are a natural choice to use in place of the monomials $\widetilde p_j(A)$ in a power-like iteration.  Next we look at how to build this idea into a method. 
In particular, we describe how to define a family of polynomials $p_N(x)$ that, after scaling, are optimal in the sense that they achieve the min-max of \eqref{eqn:pNopt}.

As shown in \cite{XHDMR18}, the polynomials $p_N(x)$ given by the recurrence relation
\begin{align}\label{eqn:hbrec}
p_{N+1}(x) = x p_N(x) - \beta p_{N-1}(x), 
\end{align}
with $p_0(x) = 1, p_1(x) = x/2$, satisfy the closed form
\begin{align}\label{eqn:hbclosed}
p_N(x) = \frac 1 2 \left( 
\left(\frac{x + \sqrt{x^2 - 4 \beta}}{2} \right)^N +  
\left(\frac{x - \sqrt{x^2 - 4 \beta}}{2} \right)^N
\right). 
\end{align}
The change of variables $x = 2 \sqrt\beta t$ then yields $p_N(x) = \beta^{N/2} T_N(t)$.
Applying these two relations to the Chebyshev recursion \eqref{eqn:chebyrec} yields
\begin{align*}
\beta^{-(N+1)/2} p_{N+1}(x) = x \beta ^{-(N+1)/2} p_N(x) - \beta^{-(N-1)/2}p_{N-1}(x),
\end{align*}
which, after multiplying through by $\beta^{(N+1)/2}$, is precisely \eqref{eqn:hbrec}.

This shows that the sequence of polynomials generated by \eqref{eqn:hbrec} is a scaled and stretched Chebyshev polynomial sequence, which remains bounded by one for 
$x \in [-2\sqrt \beta, 2 \sqrt \beta]$, and blows up outside that interval, satisfying the optimality property \eqref{eqn:chebyopt}.

Let $\beta  =  (\lambda_\ast/2)^2$, for $|\lambda_2| \le \lambda_\ast < \lambda_1$. Then from 
\eqref{eqn:chebycos}, we have 
\begin{align}\label{eqn:momrbeta}
\left| \frac {p_N(\lambda_j)}{p_N(\lambda_1)} \right|&= 
\left| \frac{T_N(\lambda_j/\lambda_\ast)}{T_N(\lambda_1/\lambda_\ast)} \right|
\le \left| \frac 1 {T_N(\lambda_1/\lambda_\ast)} \right|
= \left| \frac 1 {T_N\left(1 + \f{\lambda_1 - \lambda_\ast}{\lambda_\ast}\right)} \right|, ~\text{ for }~ j = 2 \ldots n,
\end{align}
which quantifies the convergence to the dominant eigenmode for iteration \eqref{eqn:hbrec}.
In particular, for $\lambda^\ast = |\lambda_2|$ we have
\begin{align}\label{eqn:momropt}
\frac {p_N(\lambda_2)}{p_N(\lambda_1)} 
= \frac{T_N(1)}{T_N\left(1 + \frac{\lambda_1 - |\lambda_2|}{|\lambda_2|} \right)}
= \frac{1}{T_N\left(1 + \frac{\lambda_1 - |\lambda_2|}{|\lambda_2|} \right)}
= \frac{1}{T_N\left(1 + \eps \right)},
~\text{ for }~ \eps \coloneqq \frac{\lambda_1 - |\lambda_2|}{|\lambda_2|},
\end{align}
which by \eqref{eqn:chebyopt} optimizes the convergence to the dominant eigenmode for \eqref{eqn:hbrec} with $\beta = \lambda_2^2/4$. Sharp results for the convergence of each subdominant eigenmode for $0 \le \lambda^\ast \le \lambda_1$ can be found in \cite{APZ24}, and are summarized below in subsection \ref{subsub:aug}. These results will be useful in subsection \ref{sec:precon}, where we consider the use of this iteration as a preconditioner.

\subsubsection{Normalized iteration}\label{subsub:hbnormalization} To develop a practical power-like iteration based on \eqref{eqn:hbrec}, we require its implementation based on one matrix-vector multiplication per iteration, and we require a normalization that prevents unbounded growth. To this end, let $C_N$ be a normalization factor at iteration $N$, and 
\begin{align}
x_N & = C_N p_N(A) x_0  \label{eqn:xn}\\
v_{N+1} & = Ax_N        \label{eqn:vn1}\\     
u_{N+1} & = v_{N+1} - \f{C_N}{C_{N-1}} \beta x_{N-1} \label{eqn:un1}.
\end{align}

Then, applying \eqref{eqn:xn} to \eqref{eqn:vn1}, we have
$
v_{N+1} = C_N A p_N(A) x_0,
$
and applying \eqref{eqn:xn}-\eqref{eqn:vn1} to \eqref{eqn:un1} allows
\begin{align}\label{eqn:un1p}
u_{N+1} 
= v_{N+1} - C_N \beta p_{N-1}(A) x_0 
= C_N (A p_N(A) - \beta p_{N-1}(A))x_0 
= C_N p_{N+1}(A) x_0,
\end{align}
where the last equality follows from \eqref{eqn:hbrec}. In light of the last equality in \eqref{eqn:un1p}, $x_N$ given in \eqref{eqn:xn} reduces to 
$(C_N/C_{N-1}) u_N$.
A natural choice of normalization is 
\begin{align}\label{eqn:normal}
C_N = \prod_{j = 1}^N h_j^{-1}, ~\text{ with }~ h_j = \norm{u_j},
\end{align}
by which \eqref{eqn:xn}-\eqref{eqn:un1} can be stated as
\begin{align}\label{eqn:mpscheme}
x_N & = h_N^{-1} u_N  \nonumber\\
v_{N+1} & = Ax_N        \nonumber\\     
u_{N+1} & = v_{N+1} - h_N^{-1} \beta x_{N-1},
\end{align}
which meets our requirement as a power-like iteration, and thanks to \eqref{eqn:momropt}, has an optimized rate of convergence, so long as $\beta$ can be defined appropriately. The static momentum algorithm \ref{alg:statmo} states \eqref{eqn:mpscheme} in algorithmic form. Next, we review results on the convergence of each eigenmode under \eqref{eqn:mpscheme}, then we review results on a dynamic implementation from \cite{APZ24} that allows the efficient approximation of the optimal constant $\beta = \lambda_2^2/4$.

\subsubsection{Convergence of each eigenmode}\label{subsub:aug}
While \eqref{eqn:momrbeta} and \eqref{eqn:momropt} quantify the convergence to the first eigenmode for iteration \eqref{eqn:mpscheme}, another approach to the analysis, in which we consider this iteration as a power iteration applied to an augmented matrix, will allow us to determine the convergence of each subdominant eigenmode to zero. These results will be useful in our discussion on preconditoning.  Moreover, this analysis shows how the method converges for any parameter $\beta \in (0,\lambda_1^2/4)$, and it shows that the method will not converge if $\beta \ge \lambda_1^2/4$. The following arguments were introduced in \cite{XHDMR18} and further detailed in \cite{APZ24}.

Given $n \times n$ matrix $A$, define the $2n \times 2n$ augmented matrix $A_\beta$ by 
\begin{align}\label{eqn:Abeta}
A_\beta = \begin{pmatrix} A & -\beta I \\ I & 0\end{pmatrix}.
\end{align}
Then the first component of the normalized iteration
\[
\begin{pmatrix} u_{k+1} \\ \widetilde u_{k+1}\end{pmatrix} = 
A_\beta \begin{pmatrix} x_{k} \\ \widetilde x_{k} \end{pmatrix},
\quad x_k = h_k^{-1} u_k, ~\widetilde x_k = h_k^{-1} \widetilde u_k,
\]
is seen to agree with the iteration given by \eqref{eqn:mpscheme}, for $h_k = \norm{u_k}$.  Thus the convergence of each mode follows from the standard analysis of the power iteration. It is sufficient to determine the eigenvalues $\mu_\lambda$ of $A_\beta$ corresponding to each eigenvalue of $\lambda$ of $A$, which as shown in \cite{APZ24,XHDMR18}, are given by
\begin{align}\label{eqn:mulambda}
\mu_{\lambda_{\pm}} = \frac 1 2 \left(\lambda \pm \sqrt{\lambda^2 - 4 \beta} \right).
\end{align}
In the case $\lambda^2 < 4 \beta$, \eqref{eqn:mulambda} reduces to 
$\mu_{\lambda_{\pm}} = (\lambda \pm i\sqrt{4 \beta - \lambda^2})/2$ which has the polar form representation 
\begin{align}\label{eqn:mupolar}
\mu_{\lambda_{\pm}}= r e^{i \theta}, ~\text{ with }~ r = \sqrt{\beta}
~\text{ and }~ \theta = \pm\arctan\left(\sqrt{4 \beta/\lambda^2-1} \right). 
\end{align}
Thus setting $\mu_{\lambda_j}$ as an eigenvalue of greatest magnitude corresponding to eigenvalue $\lambda_j$ of $A$, for each subdominant mode $j$, where $j = 2\ldots, n$, we have
\begin{align}\label{eqn:mpbymode}
\f{|\mu_{\lambda_j}|}{|\mu_{\lambda_1}|} = 
\left\{ \begin{array}{ll}
 \f{ |\lambda_j| +  \sqrt{\lambda_j^2 - 4\beta} }
    {|\lambda_1| +  \sqrt{\lambda_1^2 - 4\beta} }, 
& 0 \le \beta < \lambda_j^2/4 \\ \\
 \f{2\sqrt{\beta}}
   {|\lambda_1| +  \sqrt{\lambda_1^2 - 4\beta}}, 
& \lambda_j^2/4 < \beta < \lambda_1^2/4 
\end{array}\right. .
\end{align}
In the case that $\beta = \lambda_j^2/4$ for some eigenvalue $\lambda_j$ of $A$, the augmented matrix $A_\beta$ is actually defective. This is described in some detail in \cite{APZ24}. In particular, the optimal (asymptotic) convergence rate is recovered when $\beta = \lambda_2^2/4$ yielding
\begin{align}\label{eqn:amomrate}
\f{|\mu_{\lambda_2}|}{|\mu_{\lambda_1}|} \goto 
 \f{ |\lambda_2| }{|\lambda_1| + \sqrt{\lambda_1^2 - 4\beta} }
   = \f {r}{1 + \sqrt{1 - r^2}}, ~\text{ with }~ 
r = |\lambda_2 /\lambda_1| .
\end{align}
We emphasize that from \eqref{eqn:mupolar}, if $\lambda^2 < 4 \beta$, then the magnitude of each eigenvalue of \eqref{eqn:mulambda} is $\sqrt{\beta}$, 
and the eigenmodes with smaller eigenvalues feature increased phase angles, which will lead to increasingly oscillatory convergence. 
Moreover, if $\beta \ge \lambda_1^2/4$, then all eigenvalues of $A_\beta$ will have magnitude $\sqrt \beta$, and the method will not converge.

\subsubsection{Dynamic implementation}\label{subsub:hbdynamic}
The main issue preventing an efficient implementation of iteration \eqref{eqn:mpscheme}, i.e., algorithm \ref{alg:statmo}, with optimal parameter $\beta = \lambda_2^2/4$, is that $\lambda_2$ is generally {a priori} unknown. An approach for approximating $\lambda_2$ using a deflation technique is proposed in \cite{RJRH22}; however, this requires three matrix-vector multiplies per iteration in the deflation stage of the algorithm, and the guarantee of convergence following this algorithm depends on knowledge of $|\lambda_1-\lambda_2|$ and $|\lambda_2 - \lambda_3|$.
A more recent approach for approximating the optimal parameter $\beta$ is given in algorithm \ref{alg:dymo}. Here, the static $\beta$ is replaced by a dynamic approximation $\beta_k$ at each iteration.  The basis of the approximation is inverting the optimal accelerated rate $\rho(r) = r/(1 + \sqrt{1-r^2})$ as in \eqref{eqn:amomrate}, for $r(\rho)$, where $r = |\lambda_2/\lambda_1|$, yielding 
\begin{align}\label{eqn:rofrho}
r(\rho) = 2\rho/(1 + \rho^2).
\end{align}
Within the algorithm, $\rho_k$ is computed by the ratio of consecutive normed residuals, denoted $\|d_k\|$ in algorithm \ref{alg:dymo}, inverted via \eqref{eqn:rofrho} to determine $r_{k+1}$, and multiplied by the Rayleigh quotient to approximate $\lambda_2$, hence the optimal $\beta$. The essential mechanism by which $\beta_k$ starting from initial $\beta_0$ (computed from the ratio of residuals from two consecutive power iterations), tends towards the optimal $\beta = \lambda_2^2/4$, is that $r(\rho)$ is a contraction map.  In \cite{APZ24} we show this by a perturbation argument. 
This argument demonstrates that, when $r$ is close to $1$, the dynamic approximation becomes increasingly stable. Note that $r$ close to $1$ corresponds to the case where the spectral gap is small, which is the case of practical interest, where the power iteration is slow.

\begin{lemma}{\cite[Lemma 3]{APZ24}}\label{lem:rstab}
Let $\rho \in (0,1)$ and consider $\eps$ small enough so
that $(2 \rho \eps + \eps^2)/(1+\rho^2) < 1$.
Let $\rho_k = \rho + \eps$ and define
$r_{k+1} = 2 \rho_k/(1 + \rho_k^2)$, as in iteration \eqref{eqn:mpscheme} and algorithm \ref{alg:dymo}.
Then
\begin{align}\label{eqn:lemrstab}
r_{k+1} = r + \widehat \eps + \bigo(\eps^2) ~\text{ with }~
\widehat \eps = \eps \f{2 (1-\rho^2)}{(1+\rho^2)^2}.
\end{align}
\end{lemma}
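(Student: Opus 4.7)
The plan is to recognize the statement as a first-order Taylor expansion of the smooth map $r(\rho) = 2\rho/(1+\rho^2)$ about the base point $\rho$, and carry out the expansion explicitly so that the linear coefficient $2(1-\rho^2)/(1+\rho^2)^2$ appears naturally and the error is controlled as $\bigo(\eps^2)$.

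First, I would compute
\begin{align*}
1 + \rho_k^2 = (1+\rho^2) + 2\rho\eps + \eps^2 = (1+\rho^2)\bigl(1 + \delta\bigr), \quad \delta \coloneqq \frac{2\rho\eps + \eps^2}{1+\rho^2}.
\end{align*}
The hypothesis is precisely $|\delta| < 1$, which is exactly what is needed to invert $1+\rho_k^2$ via a convergent geometric series,
\begin{align*}
\frac{1}{1+\rho_k^2} = \frac{1}{1+\rho^2} \bigl( 1 - \delta + \delta^2 - \cdots \bigr) = \frac{1}{1+\rho^2} - \frac{2\rho\eps}{(1+\rho^2)^2} + \bigo(\eps^2),
\end{align*}
where the remainder is $\bigo(\eps^2)$ because $\delta = 2\rho\eps/(1+\rho^2) + \bigo(\eps^2)$ and all higher powers $\delta^j$ for $j \ge 2$ contribute at order $\eps^2$ or smaller.

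Next I would multiply by $2\rho_k = 2\rho + 2\eps$ and collect powers of $\eps$:
\begin{align*}
r_{k+1} = (2\rho + 2\eps)\left( \frac{1}{1+\rho^2} - \frac{2\rho\eps}{(1+\rho^2)^2} + \bigo(\eps^2) \right)
= \frac{2\rho}{1+\rho^2} + \eps\left( \frac{2}{1+\rho^2} - \frac{4\rho^2}{(1+\rho^2)^2}\right) + \bigo(\eps^2).
\end{align*}
Combining the $\eps$-coefficient over the common denominator $(1+\rho^2)^2$ gives $2(1+\rho^2) - 4\rho^2 = 2 - 2\rho^2 = 2(1-\rho^2)$, yielding exactly $\widehat\eps = \eps \cdot 2(1-\rho^2)/(1+\rho^2)^2$. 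Since $r = 2\rho/(1+\rho^2)$ by definition, this establishes \eqref{eqn:lemrstab}.

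The only substantive step is the geometric series expansion, and the hypothesis on $\eps$ is tailored precisely to make this valid; there is no real analytic obstacle beyond this bookkeeping. One could alternatively appeal directly to Taylor's theorem, noting that $r(\rho) = 2\rho/(1+\rho^2)$ is smooth on $\R$ with $r'(\rho) = 2(1-\rho^2)/(1+\rho^2)^2$, but the explicit expansion above has the advantage of exhibiting the role of the hypothesis $(2\rho\eps+\eps^2)/(1+\rho^2) < 1$, which is what permits the geometric-series control of the denominator rather than an abstract appeal to a remainder bound.
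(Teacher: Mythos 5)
Your proof is correct and complete. Note that the paper itself does not include a proof of this lemma: it is imported directly with the citation to \cite[Lemma 3]{APZ24}, so there is no in-text argument to compare against. That said, your expansion is the natural one and almost certainly the intended argument: you factor $1+\rho_k^2 = (1+\rho^2)(1+\delta)$ with $\delta = (2\rho\eps+\eps^2)/(1+\rho^2)$, observe that the hypothesis is exactly $|\delta|<1$ so the geometric series converges, multiply by $2\rho_k = 2\rho+2\eps$, and collect the $\eps$-coefficient $2/(1+\rho^2) - 4\rho^2/(1+\rho^2)^2 = 2(1-\rho^2)/(1+\rho^2)^2$, which matches the stated $\widehat\eps$ and also agrees with the derivative $r'(\rho)$ that the paper computes explicitly in equation \eqref{eqn:rprho} for the subsequent contraction lemma. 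Your observation that the hypothesis on $\eps$ is tailored to legitimize the geometric-series inversion of the denominator (rather than being a mere technicality for a Taylor remainder bound) is a worthwhile point of interpretation. The argument is sound; no gap.
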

The condition $(2 \rho \eps + \eps^2)/(1+\rho^2) < 1$ is satisfied for $\rho \in (0,1)$ by
$\eps < 0.71$. By \eqref{eqn:lemrstab}, $\widehat \eps/\eps$ tends to $0$ as $\rho$ tends to $1$. 

In the papers introducing dynamic iterations for the more general polynomial acceleration strategies of  \cite{CMP25-d,CMP25-r}, the map $r(\rho)$ given in this case
by \eqref{eqn:rofrho} is shown to be a contraction. Next we show the analogous result for iteration \ref{eqn:mpscheme}.

\begin{lemma}The map $r(\rho)$ given by \eqref{eqn:rofrho} is a contraction on
$(\sqrt{-2+\sqrt{5}},1) \approx (0.4859,1)$.
\end{lemma}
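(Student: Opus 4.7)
The plan is to reduce the claim to a pointwise bound on $r'(\rho)$ and then invoke the mean value theorem. First I would compute the derivative directly by the quotient rule, yielding
\[
r'(\rho) = \frac{2(1+\rho^2) - 2\rho\cdot 2\rho}{(1+\rho^2)^2} = \frac{2(1-\rho^2)}{(1+\rho^2)^2}.
\]
Since $\rho \in (0,1)$, we have $r'(\rho) > 0$, so the required bound $|r'(\rho)| < 1$ reduces to the inequality $2(1-\rho^2) < (1+\rho^2)^2$.

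Next I would expand and simplify this inequality into a polynomial inequality in $\rho$. Expanding the right-hand side and collecting terms gives $\rho^4 + 4\rho^2 - 1 > 0$. Setting $u = \rho^2$, this is the quadratic $u^2 + 4u - 1 > 0$, whose positive root is $u = -2 + \sqrt{5}$. Hence $r'(\rho) < 1$ precisely when $\rho^2 > -2 + \sqrt{5}$, i.e., when $\rho > \sqrt{-2+\sqrt{5}} \approx 0.4859$, which matches the stated interval exactly.

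Finally, to promote the pointwise derivative bound to a contraction statement, I would apply the mean value theorem: for any $\rho_1,\rho_2$ in the interval, there exists $\xi$ between them with
\[
|r(\rho_1) - r(\rho_2)| = |r'(\xi)|\,|\rho_1 - \rho_2| < |\rho_1 - \rho_2|.
\]
The one subtle point — and the main thing worth flagging — is that the supremum of $|r'|$ over the open interval equals $1$ (it is approached but not attained as $\rho \to \sqrt{-2+\sqrt{5}}^+$), so one does not obtain a single uniform Lipschitz constant strictly less than $1$ valid on the whole open interval; however, on every closed subinterval $[a,1]$ with $a > \sqrt{-2+\sqrt{5}}$ the maximum is attained at $\rho = a$ and gives a genuine uniform contraction constant, which is the sense in which the statement should be understood and how it will be used in practice (since once $\rho_k$ is bounded away from the lower threshold, the dynamic iteration contracts at a definite rate).
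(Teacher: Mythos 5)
Your proof matches the paper's: both compute $r'(\rho) = 2(1-\rho^2)/(1+\rho^2)^2$, reduce $|r'(\rho)|<1$ to the quartic $\rho^4 + 4\rho^2 - 1 > 0$, substitute $u=\rho^2$, and identify the threshold $\rho = \sqrt{-2+\sqrt{5}}$. Your additional mean-value-theorem step, and especially your caveat that the supremum of $|r'|$ over the open interval equals $1$ so that a genuine uniform contraction constant is only obtained on closed subintervals $[a,1]$ with $a>\sqrt{-2+\sqrt{5}}$, are both correct and sharpen a point the paper's proof leaves implicit.
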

\begin{proof}
Taking a first derivative yields of $r(\rho)$ given by \eqref{eqn:rofrho} yields
\begin{align}\label{eqn:rprho}
r'(\rho) = 2(1-\rho^2)/(1+\rho^2)^2.
\end{align}
From \eqref{eqn:rprho}, we have $r'(\rho)$ is decreasing on $(0,1)$ with $r'(0) = 2$ and $r'(1) = 0$.  It suffices then to find $\rho$ in $(0,1)$ with $r'(\rho) =1$. This produces the quartic equation $\rho^4 + 4 \rho^2 -1 = 0$, which is easily solved using the quadratic formula with the substitution $y = \rho^2$, and produces a single real root in the interval $(0,1)$, namely $\rho = \sqrt{-2 + \sqrt{5}} \approx 0.4859$.
\end{proof}
From \eqref{eqn:rofrho}, the lower bound of the contraction region is
$\rho = \sqrt{-2 + \sqrt{5}}$, which yields $r(\rho) \approx 0.7862$. On one hand, the contraction result is exact, whereas the perturbation result neglects higher order terms which are negligible when $\eps$ is sufficiently small, but may not be negligible when $\eps$ is larger (even if $\eps < 0.71$ still holds).  However, the contraction result neglects the effect that $r(\rho)$ is increasing on $(0,1)$. In our computations, starting with $\rho_k$ outside of the contraction region (meaning $\rho_k$ too small) does not cause the dynamic approximation to fail; in practice, the sequence of approximations $r_k$ tend to increase into the contraction regime.

A full convergence result for the dynamic momentum algorithm \ref{alg:dymo} and comparison to the static momentum method algorithm \ref{alg:statmo} with optimal $\beta$ is given in \cite{APZ24}. Here, we will follow the conclusions of that paper, and run our numerical comparisons using the dynamic algorithm \ref{alg:dymo} when we are looking at the momentum accelerated power iteration as a standalone method.  When used in conjunction with restarted Lanzcos, we found it is more stable to use the output approximation to $\lambda_2$ from restarted Lanzcos in the static momentum algorithm \ref{alg:statmo}, instead.


\subsection{(Restarted) Lanczos}\label{subsec:lanczosm}
The Lanczos method generalizes the power iteration by building an orthonormal basis for the Krylov subspace for matrix $A$ with initial iterate $v_0$, given by $\mathcal K_m = \spa \{v_0, Av_0, \ldots, A^{m-1} v_0\}$. At each step $m$, the matrix of orthogonal basis vectors $Q_m$ satisfies $T_m = Q_m^T AQ_m$, where $T_m$ is tridiagonal. The eigenvalues of $T_m$ are the Rayleigh-Ritz approximations to the eigenvalues of $A$.  For symmetric matrices, the Lanczos method is mathematically equivalent to the Arnoldi method, which computes $Q_m^\ast A Q_m = H_m$, where $H_m$ is an upper Hessenberg matrix, and reduces to tridiagonal in the symmetric case. Computationally, the Lanczos method reduces the construction of the orthogonal basis held in $Q_m$ to a 3-term recurrence.

For analysis of Lanczos convergence, we rely on the bounds given by \cite{saad80,saad11-book}, which produce estimates of how the error decreases for a given spectrum as the Krylov subspace size $m$ increases.  Other important analyses of the Lanzcos process include \cite{KuWo89,urschel21}, 
which focus on spectrum-independent uniform convergence estimates based on matrix dimension, and may yield sharper results for general symmetric matrices, but not when the spectral gap is sufficiently small. 
The spectrum-dependent convergence theory for the Lanczos process is summarized in \cite[Theorem 6.3]{saad11-book}, see also \cite[Theorem 1]{saad80}.

\begin{remark}
In subsection \ref{subsec:notation} we ordered the eigenvalues with decreasing magnitude, with $\lambda_1 >|\lambda_2|$ and 
$|\lambda_j| \ge |\lambda_{j+1}|$, for $j \ge 2$. For the convergence of the Lanczos process, the analysis requires ordering the eigenvalues in a decreasing sequence. Under the assumption made in subsection \ref{subsec:notation} that $\lambda_1>0$ is the greatest magnitude eigenvalue, we introduce a relabeling of the eigenvalues, namely
\begin{align}\label{eqn:lanorder}
\lambda_{1'} > \lambda_{j'},  ~\text{ and }~ 
\lambda_{j'} \ge \lambda_{(j+1)'}, ~\text{ for }~ j' \ge 2.
\end{align}
Then we have $\lambda_{1'} = \lambda_1$. If $A$ is positive (semi-) definite then $\lambda_{j'} = \lambda_j$, for $j = 1, \ldots, n$.  For indefinite matrices  we may have $\lambda_{j'} \ne \lambda_j$, for $j \ge 2$. 
\end{remark}

\begin{theorem}{\cite[Theorem 6.3]{saad11-book}}.
\label{thm:lanczos-conv}
The angle between the eigenvector $\phi_{i'}$ associated with 
eigenvalue $\lambda_{i'}$ and the Krylov space 
$\mathcal K_m = \spa \{v_0, Av_0, \ldots, A^{m-1} v_0\}$,
satisfies the inequality
\begin{align}\label{eqn:lan-allvec}
\tan(\phi_{i'}, \mathcal K_m) \le \f{\kappa_i}{T_{m-i}(1 + 2 \gamma_{i'})}
\tan(x_0,\phi_{i'}),
\end{align}
where
\begin{align*}
\kappa_{i'} = \left\{ 
\begin{array}{lc}
1, & {i'} = 1 \\
\prod_{j' = 1}^{i'-1} \f{\lambda_{j'}-\lambda_{{n'}}}{\lambda_{j'}-\lambda_{i'}}, & i' > 1
\end{array}
\right., \quad ~\text{ and }~
\gamma_{i'} = \f{\lambda_{i'} - \lambda_{(i+1)'}}{\lambda_{(i+1)'}-\lambda_{n'}}.
\end{align*}
\end{theorem}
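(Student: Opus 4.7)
The plan is to convert the angle bound into a polynomial approximation problem on the spectrum of $A$ and then exhibit a near-optimal polynomial that combines a Lagrange-type annihilating factor for the larger modes with a shifted Chebyshev factor, in the same spirit as the optimality argument leading to \eqref{eqn:pNopt}.

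First I would use the characterization $\mathcal{K}_m = \{p(A) v_0 : p \in \mathbb{P}_{m-1}\}$ together with the orthonormal eigenexpansion $v_0 = \sum_{j=1}^n \alpha_j \phi_{j'}$ to write $p(A) v_0 = \sum_j \alpha_j p(\lambda_{j'}) \phi_{j'}$. Decomposing this vector as $a \phi_{i'} + w$ with $w \perp \phi_{i'}$ yields $a = \alpha_i p(\lambda_{i'})$ and $\|w\|^2 = \sum_{j \ne i} |\alpha_j|^2 |p(\lambda_{j'})|^2$. Since the angle between $\phi_{i'}$ and $\mathcal{K}_m$ is bounded above by the angle between $\phi_{i'}$ and any nonzero vector in $\mathcal{K}_m$, it follows that $\tan(\phi_{i'}, \mathcal{K}_m) \le \|w\|/|a|$ for every $p \in \mathbb{P}_{m-1}$ with $p(\lambda_{i'}) \ne 0$, and we may normalize $p(\lambda_{i'}) = 1$.

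Next I would take the explicit degree-$(m-1)$ polynomial
\begin{equation*}
p(t) = \left( \prod_{j=1}^{i-1} \frac{t - \lambda_{j'}}{\lambda_{i'} - \lambda_{j'}} \right) \frac{T_{m-i}(\ell(t))}{T_{m-i}(1 + 2\gamma_{i'})}, \qquad \ell(t) = 1 + 2 \frac{t - \lambda_{(i+1)'}}{\lambda_{(i+1)'} - \lambda_{n'}},
\end{equation*}
where $\ell$ maps $[\lambda_{n'}, \lambda_{(i+1)'}]$ onto $[-1,1]$ and sends $\lambda_{i'}$ to $1 + 2\gamma_{i'}$, so $p(\lambda_{i'}) = 1$. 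The first factor annihilates the dominant modes, giving $p(\lambda_{j'}) = 0$ for $j < i$. For $j > i$, $\ell(\lambda_{j'}) \in [-1,1]$ yields $|T_{m-i}(\ell(\lambda_{j'}))| \le 1$, and each factor $(\lambda_{j'} - t)/(\lambda_{j'} - \lambda_{i'})$ (with $j < i$) is positive and strictly increasing in $t$ on $[\lambda_{n'}, \lambda_{(i+1)'}]$, hence bounded pointwise by its value at $t = \lambda_{n'}$. Multiplying these bounds gives $|p(\lambda_{j'})| \le \kappa_{i'}/T_{m-i}(1 + 2\gamma_{i'})$ for $j > i$, so that
\begin{equation*}
\frac{\|w\|}{|a|} \le \frac{\kappa_{i'}}{T_{m-i}(1 + 2\gamma_{i'})} \cdot \frac{\bigl(\sum_{j \ne i} |\alpha_j|^2\bigr)^{1/2}}{|\alpha_i|} = \frac{\kappa_{i'}}{T_{m-i}(1 + 2\gamma_{i'})} \tan(x_0, \phi_{i'}),
\end{equation*}
which is the claimed bound.

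The main step requiring care is the monotonicity argument that produces the $\kappa_{i'}$ bound: one must rely on the Saad relabeling \eqref{eqn:lanorder} to ensure that every $\lambda_{k'}$ with $k > i$ lies in $[\lambda_{n'}, \lambda_{(i+1)'}]$, to verify that each Lagrange ratio is positive and increasing there, and to track signs in the product so that the pointwise maximum over the interval is genuinely attained at $\lambda_{n'}$. Once these sign and monotonicity details are in place, the remainder of the argument is a direct appeal to the Chebyshev optimality property \eqref{eqn:chebyopt}.
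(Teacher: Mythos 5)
The paper does not prove this theorem; it is quoted verbatim (as \cite[Theorem 6.3]{saad11-book}) and used as a black box, so there is no ``paper's own proof'' to compare against. Your argument correctly reconstructs the standard proof from Saad's book: restrict to $p(A)v_0$ with $p\in\mathbb P_{m-1}$, $p(\lambda_{i'})=1$, observe $\tan(\phi_{i'},\mathcal K_m)\le \tan(\phi_{i'},p(A)v_0)=\|w\|/|a|$, and then exhibit the degree-$(m-1)$ polynomial built from the Lagrange factor annihilating $\lambda_{1'},\dots,\lambda_{(i-1)'}$ times the Chebyshev polynomial $T_{m-i}\circ\ell$ with the affine map $\ell$ sending $[\lambda_{n'},\lambda_{(i+1)'}]$ to $[-1,1]$ and $\lambda_{i'}$ to $1+2\gamma_{i'}$. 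All the key checks (degree count, $p(\lambda_{i'})=1$, $|T_{m-i}(\ell(\lambda_{j'}))|\le 1$ for $j>i$, the product of Lagrange-factor bounds equaling $\kappa_{i'}$, and the identification of $(\sum_{j\ne i}|\alpha_j|^2)^{1/2}/|\alpha_i|$ with $\tan(x_0,\phi_{i'})$) are correct.

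One small slip in the exposition: each factor $(\lambda_{j'}-t)/(\lambda_{j'}-\lambda_{i'})$ with $j'<i'$ is \emph{decreasing}, not increasing, in $t$ on $[\lambda_{n'},\lambda_{(i+1)'}]$ (its derivative is $-1/(\lambda_{j'}-\lambda_{i'})<0$ since $\lambda_{j'}>\lambda_{i'}$). That is exactly why the pointwise maximum is attained at the left endpoint $t=\lambda_{n'}$, so your conclusion is right even though the stated monotonicity direction is reversed. With that word corrected, the proof is sound and is essentially the same route Saad takes.
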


In particular, for $i' = 1$, Theorem \ref{thm:lanczos-conv} yields the estimate for convergence of the first (dominant) eigenmode
\begin{align}\label{eqn:lanerr}
\tan(\phi_1,\mathcal K_m) \le \f{1}{T_{m-1}\left(1 + 2 \eps_L \right) }\tan(x_0,\phi_1)
, ~\text{ with  }~ \eps_L \coloneqq \left(\f{\lambda_1 - \lambda_{2'}}{\lambda_{2'} - \lambda_{n'}}\right).
\end{align}
Understanding the convergence rate then reduces to quantifying how
the Chebyshev polynomial $T_{m-1}(1 + 2\eps_L)$ blows up for $\eps_L > 0$.
Applying \eqref{eqn:upperandlower} to \eqref{eqn:lanerr}
we have
\[
\left| T_{m-1}\left(1 + 2 
\left(\f{\lambda_1 - \lambda_{2'}}{\lambda_{2'} - \lambda_{n'}} \right)\right) \right|
=\left| T_{m-1}\left(1 + 2 \eps_L \right) \right|
\ge \frac 1 2 \left(1 + 2\sqrt{\eps_L} \right)^{(m-1)}.
\]
This yields an upper bound 
\begin{align}\label{eqn:lanupr}
\left| T_{m-1}\left(1 + 2 
\left(\f{\lambda_{1} - \lambda_{2'}}{\lambda_{2'} - \lambda_{n'}} \right)\right) \right|^{-1}
\le 2 \left( 1 + 2 \sqrt{\eps_L}\right)^{-(m-1)}.
\end{align}
For restarted Lanczos($m$), we can apply the bound \eqref{eqn:lanupr} repeatedly
to establish an upper bound on the convergence rate.
\section{Comparing rates}\label{sec:compare}
To determine regimes in which the momentum method 
with optimal parameter $\beta = \lambda_2^2/4$
converges faster than restarted 
Lanczos, we would like to determine the Krylov space dimension $m$ such that
the momentum accelerated convergence rate from \eqref{eqn:momropt} matches the Lanczos($m$) convergence rate determined by \eqref{eqn:lanupr}, that is
\begin{align}\label{eqn:rateq}
\log((T_{N+1}(1+\eps))^{-1}) - \log((T_N(1+\eps)))^{-1}) =
\frac { \log((T_{m-1}(1+2\eps_L))^{-2}) - \log((T_{m-1}(1+2\eps_L)^{-1} )}{m-1}.
\end{align}
Since the dynamic momentum algorithm \ref{alg:dymo}, which does not require knowledge of $\lambda_2$, has been shown to converge at the same rate, the results here apply to both methods.
Simplifying \eqref{eqn:rateq} yields
\begin{align}\label{eqn:rateqs}
\log(T_{N+1}(1+\eps)) - \log(T_N(1+\eps)) =
\frac {1} {m-1} \left( \log(T_{m-1}(1+2\eps_L)) \right),
\end{align}
where we expect the left hand side of \eqref{eqn:rateqs} to be independent of $N$ (for $N$ large enough), and the right hand side to depend $m$.  

For any fixed $\eps$, the ratio of upper and lower bounds for $T_N(1 + \eps)$ as provided by  \eqref{eqn:upperandlower} satisfy
\[ \f{ \frac{1}{2}\left( 1 + \varepsilon + \sqrt{2 \varepsilon + \varepsilon^2}\right)^N}
{\frac{1}{2} \left( 1 + \varepsilon + \sqrt{2\varepsilon + \varepsilon^2}\right)^N 
\left(1 + \left( 1 + \varepsilon + \sqrt{2 \varepsilon + \varepsilon^2}\right)^{-2N} \right)} \goto 1,
\]
as $N$ increases. Hence for sufficiently large $N$, 
namely, $N \gtrsim \eps^{-1/2}$, 
we can approximate each term in \eqref{eqn:rateqs} by the simpler estimate
$T_{N}(1+\eps) \approx \frac{1}{2}( 1 + \varepsilon + \sqrt{2 \varepsilon + \varepsilon^2})^N$. With this approximation, \eqref{eqn:rateqs} reduces to
\begin{align}\label{eqn:rateqa}
\log(1 + \eps + \sqrt{2\eps + \eps^2})
\approx
\f{-\log(2)}{m-1} + \log\left(1 + 2\eps_L + 2 \sqrt{\eps_L + \eps_L^2}\right).
\end{align}

Denote by $m_{cr}$ the $m$ at which the rates of Algorithms \ref{alg:statmo}/\ref{alg:dymo} and \ref{alg:lanczosm} cross. Assuming $\eps_L = \bigo(\eps)$, we can expand \eqref{eqn:rateqa} up to $\bigo\left( \sqrt{\eps} \right)$ and approximate $m_{cr}$ by
\begin{align}\label{eqn:mcr}
m_{cr} 
\approx 
\frac{\log(2)}{2 \sqrt{\eps_L} - \sqrt{2 \eps}  } + 1.
\end{align}

\begin{remark}\label{rem:aequiv}
In \cite{APZ24,XHDMR18}, and as summarized in subsection \ref{subsub:aug}, an explicit asymptotic convergence rate for the (static) momentum method with optimal parameter is found, namely
\[
\f{r}{1 + \sqrt{1-r^2}} 
~\text{ where }~ r = \f{|\lambda_2|}{\lambda_1} = \f{1 }{1+\eps},
\]
where $\eps$ is given by \eqref{eqn:momropt}. Using this rate in place of 
$\log((T_{N+1}(1+\eps))^{-1}) - \log((T_N(1+\eps)))^{-1})$ in \eqref{eqn:rateq}-\eqref{eqn:rateqs} yields
\begin{align}\label{eqn:conv01}
\log{\left( \left( \frac{r}{1 + \sqrt{1-r^2}}   \right)^{N+1} \right)}-
\log{\left( \left( \frac{r}{1 + \sqrt{1-r^2}}   \right)^{N} \right)}
 \log(r) - \log\left( \frac{r}{1 + \sqrt{1-r^2}}  \right).
\end{align}
Using $r = 1/(1 + \eps)$, by which
$\sqrt{1-r^2} = {1+\eps}^{-1} \sqrt{2\eps + \eps^2}$, \eqref{eqn:conv01}
reduces to $\sqrt{2 \eps} + \bigo(\eps)$, which agrees with the estimate for
$\log((T_{N+1}(1+\eps))^{-1}) - \log((T_N(1+\eps)))^{-1})$ used in \eqref{eqn:rateq}-\eqref{eqn:rateqs} to $\bigo\left(\sqrt{\eps}\right)$.
\end{remark}

\begin{remark}\label{rem:lanfast}
We observe in Example \ref{subsec:ex1}, the restarted Lanczos algorithm \ref{alg:lanczosm} converges at a rate close to the bound given by \eqref{eqn:lanerr} when $m <m_{cr}$ as given by \eqref{eqn:mcr}, but can converge faster than its predicted rate for $m$ large enough.  
\end{remark}
\subsection{Example 1}\label{subsec:ex1}
Let $A = \text{diag}(n:-1:1)$.  Then $\eps = 1/(n-1)$ and $\eps_L = 1/n$.
Table \ref{tab:computem} shows $m = m_{cr}$ such that restarted Lanczos($m$) attains the same approximate
convergence rate as the momentum-accelerated power method with the optimal parameter $\beta$. The table shows that the approximation \eqref{eqn:mcr} is sufficient to determine $m$ if $\eps$ and $\eps_L$ are known, and that $m$ increases linearly with $n$ for this example in a log-log scale. 

\begin{table}[]
    \centering
    \begin{tabular}{|l|c|c|c|}
        \hline
        $n$ & $\eps$ & $m_{cr}$ computed by \eqref{eqn:mcr} & $m_{cr}$ computed by \eqref{eqn:rateqs} and {\tt fzero}\\
        \hline
         128 & $7.87\times 10^{-3}$& 15 & 15 \\
         256 & $3.92\times 10^{-3}$& 20 & 20\\
         512 & $1.96\times 10^{-3}$& 28 & 28 \\
         1024 & $9.78\times 10^{-4}$& 39 & 39\\
         2048 & $4.89 \times 10^{-4}$& 55 & 54 \\
         4096 & $2.44 \times 10^{-4}$& 78 & 77 \\
         8192 & $1.22 \times 10^{-4}$& 109 & 107 \\
         16384& $6.10 \times 10^{-5}$& 153 & 151 \\
         \hline
    \end{tabular}
    \caption{The approximate value of $m$ at which \eqref{eqn:rateq} is satisfied, meaning restarted Lanczos($m$) and power-momentum converge at approximately the same rate for example 1 in subsection \ref{subsec:ex1}. In the right-hand column where \eqref{eqn:rateqs} is solved by the Matlab command {\tt fzero}, $N$ is taken to be 199 for $n \le 4096$, and 349 for $n \ge 8192$.}
    \label{tab:computem}
\end{table}

Figure \ref{fig:ratecompex1} shows reference lines for the predicted rates of convergence given in terms of the Chebyshev polynomials in \eqref{eqn:momropt} alongside the computed residual convergence of the dynamic momentum algorithm \ref{alg:dymo}, 
and the predicted rates of convergence \eqref{eqn:lanupr}, alongside the restarted Lanczos($m$) algorithm \ref{alg:lanczosm}, for $m = 8,16,32,64$.  
In each case $v_0$ was chosen as a vector of ones.
The plots show the dynamic momentum algorithm converging at its predicted rate, and restarted Lanczos($m$) converging at its predicted rate for $m = 8,16,32$, and faster than its rate given by the bound \eqref{eqn:lanupr}, for $m=64$. The plots also agree with table \ref{computem} which predicts the rates of convergence should agree at $m = m_{cr}=39$: in the bottom left plot we see the rates very close at $m=32$ with the dynamic momentum algorithm slightly faster, and in the bottom right with $m=64$, we see the restarted Lanczos($m$) algorithm faster than both the upper bound for its rate and the dynamic momentum method. 

\begin{figure}
\includegraphics[trim = 0pt 0pt 0pt 0pt,clip = true, width = 0.45\textwidth]
{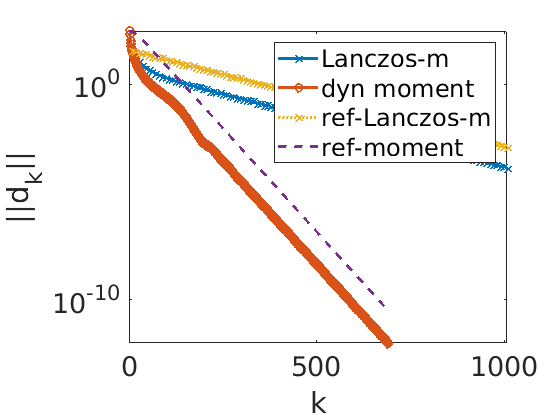}~\hfil~
\includegraphics[trim = 0pt 0pt 0pt 0pt,clip = true, width = 0.45\textwidth]
{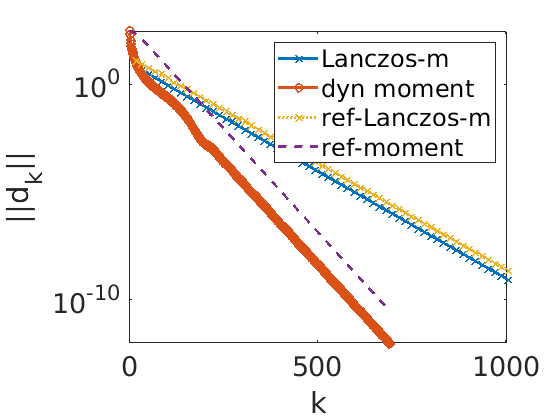} \\
\includegraphics[trim = 0pt 0pt 0pt 0pt,clip = true, width = 0.45\textwidth]
{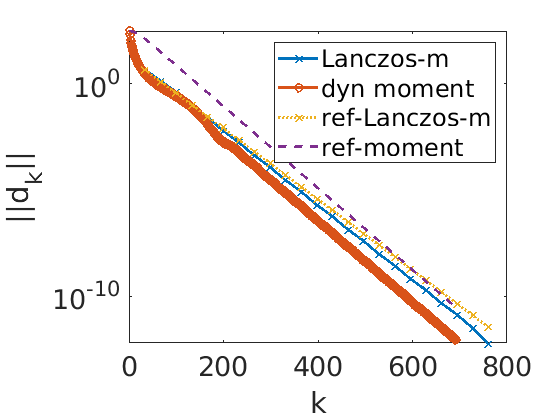}~\hfil~
\includegraphics[trim = 0pt 0pt 0pt 0pt,clip = true, width = 0.45\textwidth]
{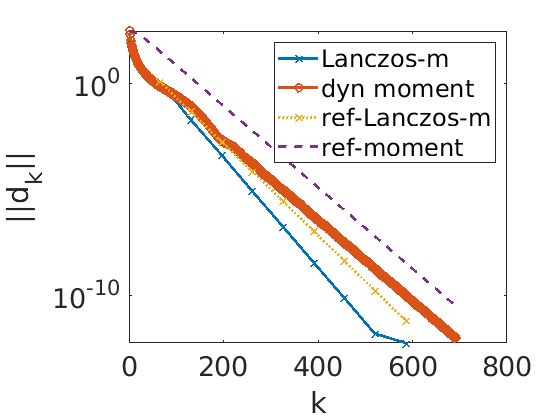} \\
\caption{Residual convergence of restarted Lanczos($m$) algorithm \ref{alg:lanczosm} and the dynamic momentum algorithm \ref{alg:dymo} for example 1, together with reference lines showing the predicted rates, with $n = 1024$ and
$m=8$ (top left), $m=16$ (top right), $m=32$ (bottom left), $m=64$ (bottom right).
\label{fig:ratecompex1}}
\end{figure}
\subsection{Example 2}\label{subsec:ex2}
Let $A = \text{diag}(n:-1:-n/2)$, for $n$ even.  Then $\eps = 1/(n-1)$ and $\eps_L = 1/(3n/2 -1)$.
Table \ref{tab:computem2} shows $m = m_{cr}$ such that restarted Lanczos($m$) attains the same approximate
convergence rate as the momentum-accelerated power method with the optimal parameter $\beta$. 
\begin{table}[]
    \centering
    \begin{tabular}{|l|c|c|c|}
        \hline
        $n$ & $\eps$ & $m_{cr}$ computed by \eqref{eqn:mcr} & $m_{cr}$ computed by \eqref{eqn:rateqs} and {\tt fzero}\\
        \hline
         128 & $7.87\times 10^{-3}$& 38 & 38 \\
         256 & $3.92\times 10^{-3}$& 52 & 52 \\
         512 & $1.96\times 10^{-3}$& 73 & 73 \\
         1024 & $9.78\times 10^{-4}$& 103 & 103\\
         2048 & $4.89 \times 10^{-4}$& 145 & 145\\
         \hline
    \end{tabular}
    \caption{The approximate value of $m$ at which \eqref{eqn:rateq} is satisfied, meaning restarted Lanczos($m$) and power-momentum converge at approximately the same rate for example 2 in subsection \ref{subsec:ex2}. In the right-hand column where \eqref{eqn:rateqs} is solved by the Matlab command {\tt fzero}, $N$ is taken to be 199.}
    \label{tab:computem2}
\end{table}

Comparing tables \ref{tab:computem} and \ref{tab:computem2} illustrates the dependence of $m_c$ on $\eps_L$ which may not agree closely with $\eps$ on indefinite problems. Convergence plots of this example are shown in section \ref{sec:numerics}, and include the use of momentum accelerated power iterations as an effective preconditioner or polynomial filter for restarted Lanczos($m$). In contrast to example 1 of subsection \ref{subsec:ex1}, where $m_c$ gives an accurate estimate of where the convergence rates cross, for the indefinite case, the predicted values of $m_c$ are higher than seen in our experiments.
\section{Preconditioning}\label{sec:precon}
To understand how we may use the momentum-based algorithm \ref{alg:statmo} as a 
preconditioner for restarted Lanczos, we consider how each of the subdominant eigenmodes in the first approximate eigenvector is reduced by the different algorithms.
As discussed in subsection \ref{subsub:aug},
the static momentum algorithm \ref{alg:statmo} with parameter $\beta = \lambda_\ast^2/4$ 
decays all subdominant modes, those with $|\lambda| < |\lambda_\ast|,$ at the same average rate, but with higher frequency oscillation for eigenvalues of decreasing magnitude. 
As shown in \cite{APZ24}, similar results hold for the dynamic momentum algorithm \ref{alg:dymo}. 

Figures \ref{fig:ratebymode1632} and \ref{fig:ratebymode64} show the average decay rate of each eigenmode for example 1 in subsection \ref{subsec:ex1} with $n = 1024$, for the power iteration, the dynamic momentum algorithm \ref{alg:dymo}, and restarted Lanczos($m$), with $m = 16, 32$ and $64$.
The average convergence rate for each mode was computed as the slope of the regression line found by Matlab's {\tt polyfit} function, fit to the residual convergence over an entire simulation. The smallest modes for the power iteration are not shown in the plots as they decay to zero very quickly after which their convergence rate is computed as {\tt NaN}.
As expected, we observe each mode in the power iteration decaying proportional to $\lambda/\lambda_1$, and each mode in the dynamic momentum algorithm \ref{alg:dymo} decaying at the same average rate. In contrast, the restarted Lanczos($m$) algorithm \ref{alg:lanczosm} displays oscillatory behavior with respect to $\lambda/\lambda_1$. We observe this behavior is not strictly periodic, but rather the frequency depends on 
the ratio $\lambda/\lambda_1$.  
We refer to this behavior as {\em quasi-periodic}. Figure \ref{fig:ratebymode1632} shows the restarted Lanczos($m$) algorithm with $m=16$ (left) and $m=32$ (right), and figure \ref{fig:ratebymode64} shows $m=64$, with a detail of the average decay rate of the larger subdominant modes.

\begin{figure}
\includegraphics[trim = 0pt 0pt 0pt 0pt,clip = true, width = 0.45\textwidth]
{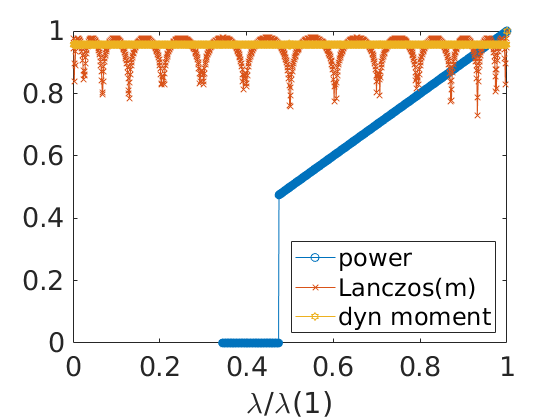}~\hfil~
\includegraphics[trim = 0pt 0pt 0pt 0pt,clip = true, width = 0.45\textwidth]
{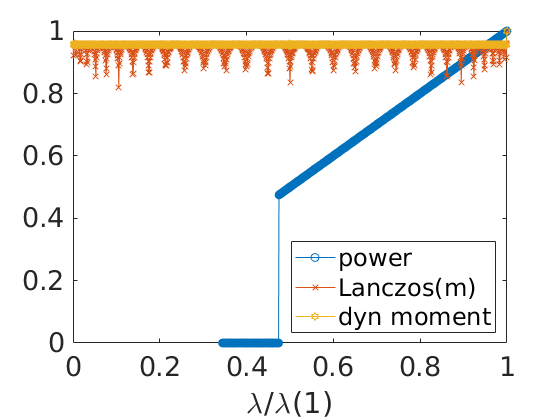} \\
\caption{Average reduction of subdominant eigenmodes by eigenvalue for the power, restarted Lanzcos($m$), and dynamic momentum algorithms applied to example 1 in subsection \ref{subsec:ex1} with $n = 1024$. Left: $m = 16$; right: $m=32$. The average convergence rate for each mode was computed as the slope of the regression line found by Matlab's {\tt polyfit} function.
\label{fig:ratebymode1632}}
\end{figure}

\begin{figure}
\includegraphics[trim = 0pt 0pt 0pt 0pt,clip = true, width = 0.45\textwidth]
{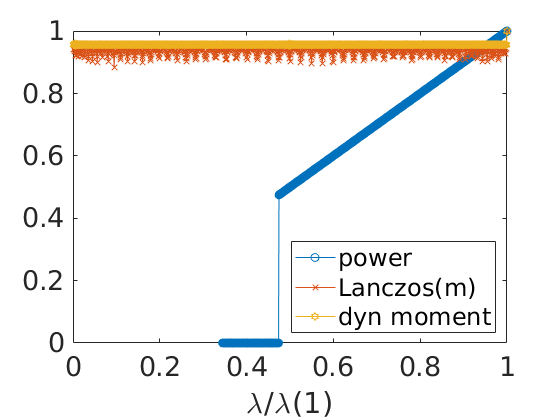}~\hfil~
\includegraphics[trim = 0pt 0pt 0pt 0pt,clip = true, width = 0.45\textwidth]
{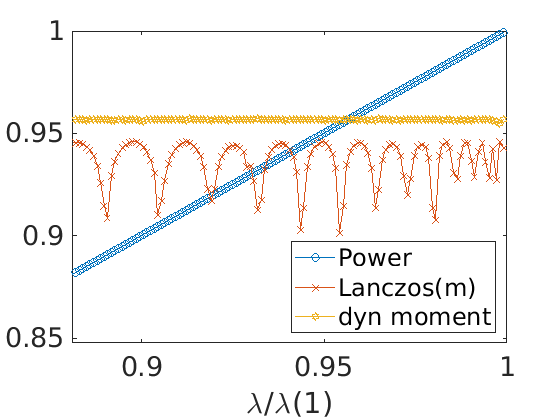} \\
\caption{Average reduction of subdominant eigenmodes by eigenvalue for the power, restarted Lanzcos($m$), and dynamic momentum algorithms applied to example 1 in subsection \ref{subsec:ex1} with $n = 1024$. Left: $m = 64$; right: $m=64$ (detail). The average convergence rate for each mode was computed as the slope of the regression line found by Matlab's {\tt polyfit} function.
\label{fig:ratebymode64}}
\end{figure}

In agreement with table \ref{tab:computem}, for $m = 16$ , the oscillations in the modal decay rate for algorithm \ref{alg:lanczosm} peak above the steady decay rate for algorithm \ref{alg:dymo}; for $m=32$, which is close to the predicted crossing point of $m=39$, the peaks for algorithm \ref{alg:lanczosm} are nearly aligned with the steady rate for algorithm \ref{alg:dymo}. In figure \ref{fig:ratebymode64} with $m=64$, we see the peaks for algorithm \ref{alg:lanczosm} are strictly beneath those for algorithm \ref{alg:dymo}, indicating a faster rate of convergence for all modes. This agrees with the faster residual convergence we see for this problem in figure \ref{fig:ratecompex1}. 

Next we consider preconditioning restarted Lanczos($m$) with momentum accelerated power iterations.  We additionally compare this approach with preconditioning with standard power iterations. In both cases, we are replacing the Krylov subspace $\mathcal K_m(x,A) = $span$\{x, Ax, \ldots A^{m-1}x\}$, with $\mathcal K_m(p_m(A)x,A))$, with $p_m(x)$, which is the shifted and rescaled Chebyshev polynomial satisfying \eqref{eqn:hbrec} and \eqref{eqn:hbclosed} in the momentum case; and $p_m(A)x = \widetilde C_m A^m x$ for preconditioning by the power iteration.  

We found experimentally the best performance using the balanced approach of $m$ momentum accelerated power iterations to precondition restarted Lanczos($m$).  Here we use the static momentum algorithm \ref{alg:statmo} rather than the dynamic algorithm \ref{alg:dymo}, as we can return an approximation to $\lambda_2$ as well as $\lambda_1$ from the restarted Lanczos algorithm \ref{alg:lanczosm}. The preconditioned algorithms are stated as follows. 

The restarted Lanczos($m$) algorithm preconditioned with momentum accelerated power iterations alternates one iteration of algorithm \ref{alg:lanczosm} with $m$ iterations of algorithm \ref{alg:statmo}.
\begin{algorithm}[Momentum preconditioned restarted Lanczos($m$)]\label{alg:mPL}
\begin{algorithmic}
\State{Choose $m$ and initial vector $x$}
\While{$\nr{d_{k+1}} \ge $ {\tt tol}}
\State{Run algorithm \ref{alg:lanczosm} with initial vector $v_0 = x$}
\State{-Output approximate eigenvalues and eigenvectors $\nu_1,\nu_2$ and $x_1,x_2$ }
\State{Run $m$ iterations of algorithm \ref{alg:statmo} with $\beta = \nu_2^2/4$ and $v_0 = x_1$} 
\State {-Output approximate eigenvector $x$}
\EndWhile
\end{algorithmic}
\end{algorithm}

We compare this method with restarted Lanczos($m$) preconditioned by standard power iterations, run by alternating one iteration of algorithm \ref{alg:lanczosm} with $m$ iterations of algorithm \ref{alg:pow}.
\begin{algorithm}[Power preconditioned restarted Lanczos($m$)]\label{alg:pPL}
\begin{algorithmic}
\State{Choose $m$ and initial vector $x$}
\While{$\nr{d_{k+1}} \ge $ {\tt tol}}
\State{Run algorithm \ref{alg:lanczosm} with initial vector $v_0 = x$}
\State{-Output approximate eigenvalue and eigenvector $\nu_1$ and $x_1$ }
\State{Run $m$ iterations of algorithm \ref{alg:pow} with $v_0 = x_1$} 
\State {-Output approximate eigenvector $x$}
\EndWhile
\end{algorithmic}
\end{algorithm}

Convergence to the dominant eigenvector $\phi_1$ is controlled by the slowest decaying subdominant eigenmode.  In the power iteration, this is naturally the second mode, as each mode decays like $|\lambda/\lambda_1|$. In the restarted Lanczos($m$) algorithm \ref{alg:lanczosm}, the slowest decay occurs quasi-periodically throughout the spectrum.
The role of preconditioning can be viewed as reducing the peak of the oscillations in the restarted Lanczos($m$) mode-wise decay rate. As in section \ref{sec:compare}, denote $m_{cr}$ as the critical value of $m$ where the convergence rate of restarted Lanczos($m$) algorithm \ref{alg:lanczosm} agrees with the optimal static or dynamic momentum decay rates of algorithms \ref{alg:statmo} and \ref{alg:dymo}. Then for $m < m_{cr}$, the momentum algorithms converge faster to the dominant eigenmode than restarted Lanczos($m$), and preconditioning the input yields an intermediate performance. 

Of particular interest, as seen in the examples in the next section, is that for $m > m_c$, the momentum preconditioned restarted Lanczos($m$) algorithm \ref{alg:mPL} converges in generally fewer iterations than  without the preconditioning. In contrast, preconditioning with standard power iterations as in algorithm \ref{alg:pPL} improves the iteration count less consistently. Figure \ref{fig:modes12-64} shows the detailed convergence for the 12th and 64 modes of example 1 of subsection \ref{subsec:ex1}, where restarted Lanczos($m$) is run with $m=64$ as in figure \ref{fig:ratebymode64}. The two modes we selected are at the peaks of the curve in figure \ref{fig:ratebymode64}, meaning the Lanczos convergence is the slowest. The plots illustrate how alternating with the momentum accelerated power iteration, which gives oscillatory convergence for each eigenmode, actually improves the average convergence rate for the slow-to-converge Lanczos modes.

In the next section we compare the restarted Lanczos and dynamic momentum algorithms \ref{alg:lanczosm} and \ref{alg:dymo} against the momentum preconditioned restarted Lanczos ($m$) algorithm \ref{alg:mPL} and the power preconditioned restarted Lanczos($m$) algorithm \ref{alg:pPL}, which only provides a convergence advantage to the eigenmodes with the smallest eigenvalues. We see the best overall convergence in most circumstances from the momentum preconditioned restarted Lanczos($m$) algorithm \ref{alg:mPL}, with $m$ chosen sufficiently large. 

\begin{figure}
\includegraphics[trim = 0pt 0pt 0pt 0pt,clip = true, width = 0.48\textwidth]
{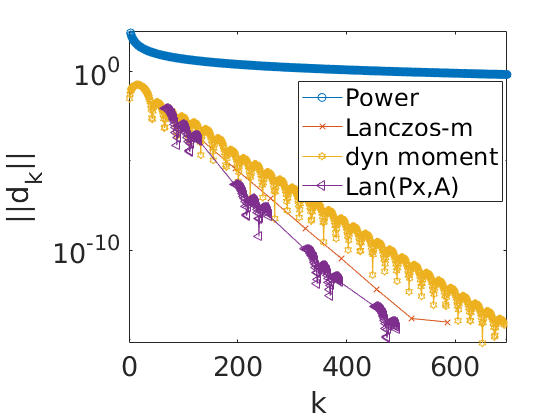}~\hfil~
\includegraphics[trim = 0pt 0pt 0pt 0pt,clip = true, width = 0.48\textwidth]
{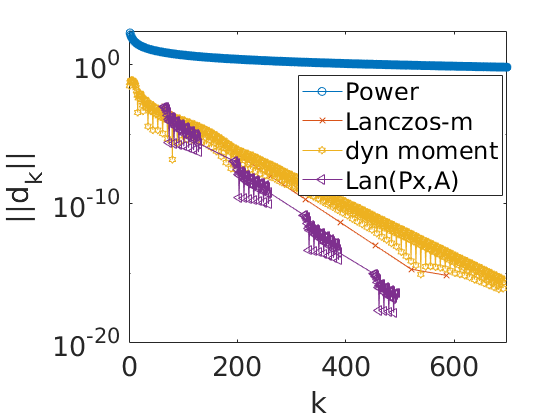} \\
\caption{Reduction of subdominant eigenmodes by matrix-vector multiplies $k$ for the power, restarted Lanzcos($m$), dynamic momentum, and preconditioned restarted-Lanczos($m$) algorithms for example 1 of subsection \ref{subsec:ex1} with $n = 1024$ and $m=64$. Left: mode $j=12$ with $\lambda_j/\lambda_1 = 1013/1024 \approx 0.9893$; right: mode $j=64$ with $\lambda_j/\lambda_1 = 961/1024 \approx 0.9385$
\label{fig:modes12-64}}
\end{figure}
\section{Numerical examples}\label{sec:numerics}
Here we compare convergence of the dynamic momentum algorithm \ref{alg:dymo}, the restarted Lanczos($m$) algorithm \ref{alg:lanczosm}, and restarted Lanczos($m$) using the static momentum algorithm \ref{alg:statmo} as in algorithm \ref{alg:mPL}, and the power iteration \ref{alg:pow} as a preconditioner as in algorithm \ref{alg:pPL}.

For the purpose of reproducibility of our results, we start each test with the vector of ones as the initial iterate.
We start with the indefinite matrix of example 2 from subsection \ref{subsec:ex2}, then consider sets of benchmark problems taken from the literature in subsections \ref{subsec:ts1} and \ref{subsec:ts2}.

\begin{figure}
\includegraphics[trim = 0pt 0pt 0pt 0pt,clip = true, width = 0.45\textwidth]
{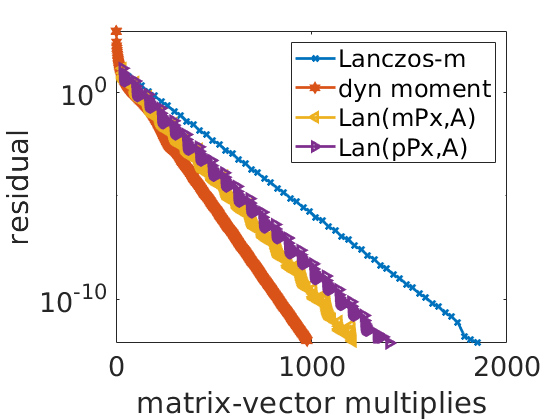}~\hfil~
\includegraphics[trim = 0pt 0pt 0pt 0pt,clip = true, width = 0.45\textwidth]
{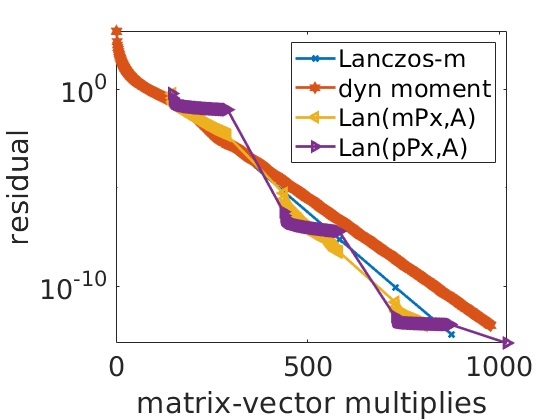} \\
\caption{Residual convergence of restarted Lanczos($m$) algorithm \ref{alg:lanczosm}, the dynamic momentum algorithm \ref{alg:dymo}, the momentum preconditioned Lanczos($m$) algorithm \ref{alg:mPL} denoted Lan(mPx,A), and the power preconditioned Lanczos($m$) algorithm denoted Lan(pPx,A), for example 2 of subsection \ref{subsec:ex2}, with $n = 2048$ and
$m=32$ (left), and $m=145$ (right).
\label{fig:ex2indef}}
\end{figure}

In figure \ref{fig:ex2indef}, we show residual convergence for restarted Lanczos($m$) algorithm \ref{alg:lanczosm}, the dynamic momentum algorithm \ref{alg:dymo}, the momentum preconditioned Lanczos($m$) algorithm \ref{alg:mPL} denoted Lan(mPx,A), and the power preconditioned Lanczos($m$) algorithm denoted Lan(pPx,A), for example 2 of subsection \ref{subsec:ex2}. Here, $A$ is the $3073 \times 3073$ matrix given by $A = \diag(n:-1:-n/2)$, with $n = 2048$.
The dimension of the Krylov subspace used for restarted Lanczos in the 
left plot is $m=32$, and the right plot shows $m=145$. 
In subsection \ref{subsec:ex2} the predicted Krylov subspace dimension $m_c$ where algorithms \ref{alg:dymo} and \ref{alg:lanczosm} would converge at the same rate was $m_c = 145$.  As seen in figure \ref{fig:ex2indef}, this appears to be an overestimate as the 
convergence plot shows an observably steeper slope for restarted Lanczos($m$) than for dynamic momentum with $m=145$. It is further interesting to notice in this plot that the preconditioning stage shows a steeper slope using the momentum method than the power method, however the resulting decrease in preconditioned Lanzcos is more pronounced with the power method preconditioning with $m=145$.  However, in the left plot of figure \ref{fig:ex2indef} where $m=32$ which is well within the regime where the dynamic method converges faster, we see the momentum preconditioning gives an overall faster convergence then the power preconditioning. 

\subsection{Test set 1}\label{subsec:ts1}
Our first set of test matrices given in table \ref{tab:testset1} is from \cite{WuSi00}. 
\begin{table}[]
    \centering
    \begin{tabular}{|l|c|c|c|c|}
        \hline
        matrix & $n$ & $\lambda_1$  &$\eps$  \\
        \hline
         {\tt nasasrb} &54,870 & $2.64806 \times 10^9$ & $3.68692\times10^{-5}$\\
         {\tt s3dkq4m2} &90,449 &$4601.65$ &$2.24013\times10^{-4}$ \\
         {\tt s3dkt3m2} &90,449 &$8798.44$ &$4.33693\times10^{-4}$ \\
         \hline
    \end{tabular}
    \caption{Test set 1, the matrices from \cite{DH11} used in \cite{WuSi00}. All three matrices in this set are symmetric positive definite (SPD). The values
    of $\lambda_1$ and $\eps$ listed for each matrix are computed from algorithm \ref{alg:lanczosm} with $m=64$, and agreed across all methods that sufficiently converged.}
   \label{tab:testset1}
\end{table}
\begin{table}[]
    \centering
    \begin{tabular}{|l|c|c|c|}
        \hline
           & {\tt nasasrb} & {\tt s3dkq4m2} & {\tt s3dkt3m2} \\
        \hline
    dyn moment & 2431 & 942 & 1205 \\
    \hline
    Lanczos(16)&$>$5000 & 2635& 4267\\
    Lanczos(32)&462 & 1452& 2211\\
    Lanczos(64)&{\bf 195} & 910 & 1365\\
    \hline
    mp-Lanczos(16)& 3630& 1485 & 2508\\
    mp-Lanczos(32)& 455& 1105 & 1625\\
    mp-Lanczos(64)&258 & {\bf 774} & {\bf 1032}\\
    \hline
    pp-Lanczos(16)&4884 & 1881 & 2970\\
    pp-Lanczos(32)& 455& 1430& 2015 \\
    pp-Lanczos(64)&258 & 1161& 1419 \\
         \hline
    \end{tabular}
    \caption{Results for the test set 1 matrices given in \ref{tab:testset1}. The number of matrix-vector multiplies to (relative) residual convergence $\|Ax - \lambda x\|/|\lambda| < 10^{-12}$. Algorithm \ref{alg:dymo} is denoted dyn moment, algorithm \ref{alg:lanczosm} is denoted Lanczos($m$), algorithm \ref{alg:mPL} is denoted mp-Lanczos($m$) and algorithm \ref{alg:pPL} is denoted pp-Lanczos($m$.)}
   \label{tab:set1results}
\end{table}
Due to the scaling of the eigenvalues of the first matrix {\tt nasarb}, which features $\lambda_1$ on the order of $10^{9}$, we terminated iterations once the relative residual $\|Ax - \nu x\|/|\nu| <${\tt tol}. 
On this test set, we generally saw the best convergence by iteration count with momentum preconditioned Lanzcos($64$), where we tested Krylov subspace sizes $m = 16,32,64$. The exception to this is {\tt nasarb}, which converged in 195 iterations without preconditioning and 258 with.  In agreement with our results in section \ref{sec:compare}, we also see the dynamic momentum algorithm \ref{alg:dymo} outperforms restarted Lanczos($m$) algorithm \ref{alg:lanczosm} for smaller values of $m$, but restarted Lanczos($m$) converges faster after some critical value $m > m_c$.  Here we did not attempt to compute $m_c$ as $\eps_L$ given by \eqref{eqn:lanerr} is unknown without further spectral knowledge.

\subsection{Test set 2}\label{subsec:ts2}
Our second set of test matrices from \cite{DH11} is used to benchmark performance in \cite{DSYG18}. Each of these matrices is symmetric. Table \ref{tab:testset2} specifies whether each matrix is additionally positive definite (SPD), and whether it is labeled as a Cholesky candidate in \cite{DH11}.  The values of $\lambda_1$ listed agreed across all methods.

\begin{table}[]\label{computem}
    \centering
    \begin{tabular}{|l|c|c|c|c|}
        \hline
        matrix &$n$ & $\lambda_1$ & SPD & Cholesky candidate  \\
        \hline
         {\tt Si5H12}      & 19896& 58.5609 & N & Y\\
         {\tt c-65}        & 48066& 131413 & N & N\\
         {\tt Andrews}     & 60000&36.4853 & Y & Y \\
         {\tt Ga3As3H12}   & 61349& 1299.88 & N & Y\\
         {\tt Ga10As10H30} & 113081& 1300.8 & N & Y \\
         \hline
    \end{tabular}
    \caption{Test set 2, the matrices from \cite{DH11} used in \cite{DSYG18}. All matrices in this set are symmetric. The values
    of $\lambda_1$ listed for each matrix agreed across all methods.}
   \label{tab:testset2}
\end{table}

\begin{table}[]
    \centering
    \begin{tabular}{|l|c|c|c|c|c|}
        \hline
           & {\tt Si5H12} & {\tt c-65} & {\tt Andrews} & {\tt Ga3As3H12} & {\tt Ga10As10H30} \\
        \hline
    dyn moment &456 &40 & 592& $F(9\!\cdot\!10^{-9})$ & F($3\!\cdot\!10^{-5}$)\\
    \hline
    Lanczos(16)&527 & F($5\!\cdot\!10^{-12}$)& 1071& $F(2\!\cdot\!10^{-12})$ & F($3\!\cdot\!10^{-5}$)\\
    Lanczos(32)&330 & F($2\!\cdot\!10^{-12}$)& 528& 231 & 429\\
    Lanczos(64)&325 & F($5\!\cdot\!10^{-12}$)& 260& F($2\!\cdot\!10^{-12}$)& 585\\
    \hline
    mp-Lanczos(16)&460 &{\bf24} &650 & 184& F($2\!\cdot\!10^{-4}$) \\
    mp-Lanczos(32)&{\bf322} &101 &511 &{\bf165} & 425 \\
    mp-Lanczos(64)&325 &83 & 325& 327& {\bf196}\\
    \hline
    pp-Lanczos(16)&481 &26 & 646& 184 & F($2\!\cdot\!10^{-4}$)\\
    pp-Lanczos(32)&361 &40 & 490 & 167& 425 \\
    pp-Lanczos(64)&324 &73 & {\bf244}& 196&{\bf196} \\
         \hline
    \end{tabular}
    \caption{Results for the test set 2 matrices given in \ref{tab:testset1}. The number of matrix-vector multiplies to (absolute) residual convergence $\|Ax - \lambda x\|< 10^{-12}$. Runs that converged to a residual tolerance greater than $10^{-12}$ after 5000 iterations are denoted F, and the approximate tolerance they stabilized to is listed. Algorithm \ref{alg:dymo} is denoted dyn moment, algorithm \ref{alg:lanczosm} is denoted Lanczos($m$), algorithm \ref{alg:mPL} is denoted mp-Lanczos($m$) and algorithm \ref{alg:pPL} is denoted pp-Lanczos($m$.)}
   \label{tab:set2results}
\end{table}

The results shown in table \ref{tab:set2results} show the momentum preconditioned Lanczos($m$) algorithm \ref{alg:mPL} converging with the fewest matrix-vector multiplies for 
three of the matrices in the set, with the power preconditioned version \ref{alg:pPL} matching that efficiency in one case and exceeding it in another.  
Of interest, the larger Krylov subspace sizes $m$ did not provide an advantage in all of these test cases.  In particular, for {\tt c-65}, the fastest convergence was found with $m=16$.
For this matrix, each run of restarted Lanczos without preconditioning terminated after a maximum number of iterations having nearly but not quite achieved the desired tolerance of $10^{-12}$. This demonstrates that the preconditioning can allow increased accuracy.
Similar results were found for {\tt Ga3As3H12}, which achieved the best convergence with momentum preconditioned algorithm \ref{alg:mPL} using $m=32$.
For {\tt Si5H12}, the fastest convergence was found with $m=32$, using momentum preconditioning although the iteration count differs by only three between $m=32$ and $m=64$, and similar to power preconditioning with $m=64$.  One of the interesting observations from this data set is that the momentum preconditioning can be advantageous for restarted Lanczos for smaller Krylov subspace sizes, achieving the same or better convergence than either restarted Lanczos($m$) alone or with power preconditioning with larger Krylov subspace sizes.  Overall this points to efficiency advantages in both iteration count and memory.

\section{Conclusion}
In this paper we quantified the comparative efficiency between a momentum accelerated power iteration and the restarted Lanczos algorithm for symmetric (or complex Hermitian) eigenvalue problems. We then investigated the momentum accelerated power iteration as a polynomial filter or preconditioning technique to accelerate restarted Lanczos. As shown in \cite{APZ24}, while each subdominant mode in the momentum accelerated power iteration converges at the same average rate, the convergence of modes with smaller eigenvalues is more oscillatory. Here we also illustrate that the average convergence of the restarted Lanczos method is quasi-periodic with respect to eigenvalue magnitude.  
We found the momentum accelerated power iteration, which can be efficiently implemented using the dynamic strategy of \cite{APZ24}, has a faster convergence rate that restarted Lanczos($m$) for $m < m_c$, where $m_c$ depends on the relative spectral gap $\eps = (\lambda_1 - \lambda_2)/\lambda_2$, and also $\eps_L = (\lambda_1 - \lambda_{2'})/(\lambda_{2'} - \lambda_{n'})$, where $\lambda_2$ is the eigenvalue of secondary magnitude, $\lambda_{2'}$ is the second largest signed eigenvalue, and $\lambda_{n'}$ is the left-most eigenvalue in the spectrum of $A$, which agrees with $\lambda_n$ when $A$ is positive definite, but not when $A$ has negative eigenvalues.

Our numerical results confirmed our theoretical results on estimating which method would be more efficient based on a priori knowledge of $\eps$ and $\eps_L$.  Our numerical results further confirmed that momentum accelerated power iterations are an effective preconditioner for the restarted Lanczos algorithm, improving both the number of matrix-vector multiplies to convergence, and residual accuracy. This suggests it may be promising to study the analogous technique for classes of nonsymmetric matrices using the Arnoldi method in place of Lanczos, and the generalized momentum methods of \cite{CMP25-d,CMP25-r} in place of the Chebyshev-based momentum methods.

\section{Acknowledgements}
AB and SP were supported in part by NSF grant DMS 2045059 (CAREER).


\bibliographystyle{abbrv}
\bibliography{accrefs}

@article{sorensen1992implicit,
  title={Implicit application of polynomial filters in a k-step {A}rnoldi method},
  author={Sorensen, Danny C},
  journal={Siam journal on matrix analysis and applications},
  volume={13},
  number={1},
  pages={357--385},
  year={1992},
  publisher={SIAM}
}

@article{KuWo89,
  author = {Kuczy\'{n}ski, J. and Wo\'{z}niakowski, H.},
  title  = {Estimating the Largest Eigenvalue by the Power and {L}anczos Algorithms with a Random Start},
  journal = {SIAM Journal on Matrix Analysis and Applications},
  volume  = {13},
  number  = {4},
  pages   = {1094-1122},
  year    = {1992},
  doi     = {10.1137/0613066},
}

@article{knyazev87,
  author = {A. V. Knyazev},
  title  = {Convergence rate estimates for iterative methods for a mesh symmetric eigenvalue problem}, 
  journal = {Russian J. Numer. Anal. Math. Modelling}, 
  year    = {1987},
  volume  = {2},
  number  = {5},
  pages   = {371–396},
}

@misc{CMP25-d,
  author = {P. Cowal and N. F. Marshall and S. Pollock}, 
  title  = {Faber polynomials in a deltoid region and power iteration momentum methods}, year   = {2025},
  note   = {https://arxiv.org/abs/2507.01885}
}

@misc{CMP25-r,
  author = {P. Cowal and  N. F. Marshall and S. Pollock}, 
  title  = {Random walks, {F}aber polynomials, and accelerated power methods}, 
  year   = {2025}, 
  note   = {https://arxiv.org/abs/2510.24608} 
}

@book{QSS07,
  title    = {Numerical Mathematics},
  author   = {A. Quarteroni and R. Sacco and F. Saleri},
  publisher= {Springer},
  series   = {Texts in Applied Mathematics},
  year     = {2007},
  location = {Berlin Heidelberg},
  edition  = {2},
  isbn     = {978-3-540-34658-6}
}

@article{GuRo02,
title   = {The {C}hebyshev iteration revisited},
journal = {Parallel Computing},
volume  = {28},
number  = {2},
pages   = {263-283},
year    = {2002},
doi     = {10.1016/S0167-8191(01)00139-9},
author  = {Martin H. Gutknecht and Stefan Röllin},
}

@InProceedings{RJRH22,
  title =        {Practical and Fast Momentum-Based Power Methods},
  author =       {Rabbani, Tahseen and Jain, Apollo and Rajkumar, Arjun and Huang, Furong},
  booktitle =    {Proceedings of the 2nd Mathematical and Scientific Machine Learning Conference},
  pages =        {721-756},
  year =         {2022},
  editor =       {Bruna, Joan and Hesthaven, Jan and Zdeborova, Lenka},
  volume =       {145},
  series =       {Proceedings of Machine Learning Research},
  publisher =    {PMLR},
  url =          {https://proceedings.mlr.press/v145/rabbani22a.html},
}

@article{DSYG18,
  author  = {Duersch, Jed A. and Shao, Meiyue and Yang, Chao and Gu, Ming},
  title   = {A Robust and Efficient Implementation of {LOBPCG}},
  journal = {SIAM Journal on Scientific Computing},
  volume  = {40},
  number  = {5},
  pages   = {C655-C676},
  year    = {2018},
  doi     = {10.1137/17M1129830},
}

@article{DH11,
  author  = {Timothy A. Davis and Yifan Hu},
  title   = {The {U}niversity of {F}lorida Sparse Matrix Collection},
  journal = {ACM Transactions on Mathematical Software},
  volume  = {38},
  number  = {1},
  pages   = {1-25},
  year    = {2011},
  doi     = {10.1145/2049662.2049663},
  url     = {https://sparse.tamu.edu}
}

@article{aishima15,
  author = {Aishima, K.}, 
  title = {Global convergence of the restarted {L}anczos and {J}acobi–{D}avidson methods for symmetric eigenvalue problems}, 
  journal = {Numer. Math.}, 
  volume = {131}, 
  pages  = {405–423},
  year   = {2015},
  doi    = {10.1007/s00211-015-0699-4}
}

@article{urschel21,
author = {Urschel, John C.},
title = {Uniform Error Estimates for the {L}anczos Method},
journal = {SIAM Journal on Matrix Analysis and Applications},
volume = {42},
number = {3},
pages = {1423-1450},
year = {2021},
doi = {10.1137/20M1331470},
}

@article{WuSi00,
  author = {Wu, Kesheng and Simon, Horst},
  title = {Thick-Restart {L}anczos Method for Large Symmetric Eigenvalue Problems},
  journal = {SIAM Journal on Matrix Analysis and Applications},
  volume = {22},
  number = {2},
  pages = {602-616},
  year = {2000},
  doi = {10.1137/S0895479898334605},
}

@book{Gautschi,
  author = {Walter Gautschi},
  title  = {Numerical Analysis},
  publisher = {Birkhäuser},
  isbn      = {978-0-8176-8259-0},
  address   = {Boston, MA},
  edition   = {2},
  year      = {2011}
}

@book{cheney66-approx,
  author = {E. W. Cheney},
  title  = {Introduction to Approximation Theory},
  edition= {1},
  publisher= {McGraw-Hill},
  year   = {1966},
  ISBN   = {9780070107571},
  address= {}
}

@article{saad80,
  author = {Y. Saad},
  title  = {On the rates of convergence of the {L}anczos and the block-{L}anczos methods},
    journal={SIAM Journal on Numerical Analysis},
  volume={17},
  number={5},
  pages={687--706},
  year={1980},
  publisher={SIAM} 
}

@book{saad11-book,
author = {Y. Saad},
title = {Numerical Methods for Large Eigenvalue Problems},
publisher = {Society for Industrial and Applied Mathematics},
year = {2011},
doi = {10.1137/1.9781611970739},
address = {},
edition   = {},
}

@article{GoVa61,
  title={Chebyshev semi-iterative methods, successive overrelaxation iterative methods, and second order {R}ichardson iterative methods},
  author={Golub, Gene H and Varga, Richard S},
  journal={Numerische Mathematik},
  volume={3},
  number={1},
  pages={157--168},
  year={1961}
}

@article{polyak1964,
  title={Some methods of speeding up the convergence of iteration methods},
  author={Polyak, Boris T},
  journal={USSR computational mathematics and mathematical physics},
  volume={4},
  number={5},
  pages={1--17},
  year={1964},
  publisher={Elsevier}
}

@article{PoSc21-arnoldi,
  title={Extrapolating the {A}rnoldi algorithm to improve eigenvector convergence},
  author={Pollock, Sara and Scott, L Ridgway},
  journal={International Journal of Numerical Analysis and Modeling},
  volume = {18},
  number = {5},
  pages  = {712--721},
  year={2021}
}

@article{APZ24,
  title={Dynamically accelerating the power iteration with momentum},
  author={Austin, Christian and Pollock, Sara and Zhu, Yunrong},
  journal={Numerical Linear Algebra with Applications},
  volume={31},
  number={6},
  pages={e2584},
  year={2024},
  publisher={Wiley Online Library}
}

@inproceedings{XHDMR18,
  title={Accelerated stochastic power iteration},
  author={Xu, Peng and He, Bryan and De Sa, Christopher and Mitliagkas, Ioannis and Re, Chris},
  booktitle={International Conference on Artificial Intelligence and Statistics},
  pages={58--67},
  year={2018},
  organization={PMLR}
}

@article{SaVi14,
  title={Faster algorithms via approximation theory},
  author={Sachdeva, Sushant and Vishnoi, Nisheeth K and others},
  journal={Foundations and Trends{\textregistered} in Theoretical Computer Science},
  volume={9},
  number={2},
  pages={125--210},
  year={2014},
  publisher={Now Publishers, Inc.}
}

\end{document}